\journal{arXiv}
\newtheorem{theorem}{Theorem}
\newdefinition{definition}{Definition}
\newdefinition{hypothesis}{Hypothesis}
\newdefinition{remark}{Remark}
\newproof{proof}{Proof}
\newdefinition{example}{Example}
\DeclareMathAlphabet{\mathpzc}{OT1}{pzc}{m}{it}
\DeclareMathOperator*{\col}{col}
\DeclareMathOperator*{\Numb}{Index}
\DeclareMathOperator*{\cl}{cl}
\def\u{\mathbf{u} }
\def\v{\mathbf{v} }
\def\w{\mathbf{w} }
\def\x{\mathbf{x} }
\def\y{\mathbf{y} }
\def\m{\mathbf{m} }
\def\iversor{\boldsymbol{\widehat{\imath} } }
\def\kversor{\boldsymbol {\widehat{k} } }
\def\jversor{\boldsymbol {\widehat{\jmath} } }
\def\zhat{{\boldsymbol {\widehat{z} } } }
\def\n{\boldsymbol{\widehat{n} } }
\def\Proj{\text{Proj}}
\def\E{\mathcal{E} }
\def\triang{\mathcal{T}}
\def\graph{\mathcal{G}}
\def\Kz{K^{\zeta}}
\def\Gammatilde{\widetilde{\Gamma}_{ \Omega, \triang}}
\def\nuhat{\widehat{\nu } }
\def\nutilde{ \boldsymbol{\widehat{ \nu} } }
\def\curv{\textbf{curv}}
\def\grav{\textbf{grav}}
\def\g{g}
\def\fric{\textbf{fric}}
\def\prob{\boldsymbol{\mathbbm{P}}}
\def\ind{\boldsymbol {\mathbbm{1} } }
\def\R{\boldsymbol{\mathbbm{R} } }
\def\defining{\overset{\textbf{def}}=}
\def\V{\boldsymbol{ V } }
\def\H{\boldsymbol{ H } }
\def\grad{\textbf{grad } }
\def\div{\textbf{div } }
\def\itfc{\textbf{itfc} }
\def\bdry{\textbf{bdry} }
\def\ext{\textbf{ext} }
\def\ones{\mathbf{ 1 } }
\begin{document}

\begin{frontmatter}

\title{Tangential Fluid flow within 3D narrow fissures: Conservative velocity fields \\
on associated triangulations and transport processes.}
\tnotetext[mytitlenote]{This material is based upon work supported by the project HERMES 27798 from Universidad Nacional de Colombia, Sede Medell\'in..}


\author[mymainaddress]{Fernando A Morales}

\cortext[mycorrespondingauthor]{Corresponding Author}
\ead{famoralesj@unal.edu.co}

\address[mymainaddress]{Escuela de Matem\'aticas
Universidad Nacional de Colombia, Sede Medell\'in \\
Calle 59 A No 63-20 - Bloque 43, of 106,
Medell\'in - Colombia}

\author[mymainaddress]{Jorge M Ram\'irez}

\begin{abstract}
For a fissured medium with uncertainty in the knowledge of fractures' geometry, a conservative tangential flow field is constructed, which is consistent with the physics of stationary fluid flow in porous media and an interpolated geometry of the cracks. The flow field permits computing preferential fluid flow directions of the medium, rates of mechanical energy dissipations and a stochastic matrix modeling stream lines and fluid mass transportation, for the analysis of solute/contaminant mass advection-diffusion as well as drainage times. 
\end{abstract}

\begin{keyword}
fissured media, preferential flow, probabilistic modeling
\MSC[2010] 76S05 \sep 97M99 \sep 94A17
\end{keyword}
\end{frontmatter}



%
%

%

\section{Introduction}

Fissured media are common geological structures. In such a medium, fast flow occurs on the cracks while the rock matrix constitutes the slow flow region. The associated transport phenomenon has been extensively studied from several points of view and at different scales of modeling due to its remarkable importance in different fields such as: oil extraction, water supply, pollution of subsurface streams and soils, waste management, etc. Several models of coupled systems of partial differential equations have been proposed, such as double \cite{ArbogastDouglasHornung, Barenblatt} and multiple porosity systems \cite{SpagnuoloWright}, microstructure models \cite{Show97} and the coupling of laws at different scales: see \cite{ArbLehr2006} for an analytic approach of a Darcy-Stokes system and see \cite{LesinigoAngeloQuarteroni} for a numerical treatment of a Darcy-Brinkman system. In addition, various numerical \cite{ArbBrunson2007, Mclaren} or numerical/analytical \cite{ShowalterPeszynska, WangFengHan} works have dealt with the discretization and numerical aspects of the proposed models.

For small values of the Reynolds number, the saturated flow within the fissures is predominantly parallel to the surface hosting it. This fact has been mathematically shown in \cite{ArbogastDouglasHornung, WalkingtonShowalter} using homogenization techniques and in \cite{SpagnuoloCoffield, Morales, MoralesShow2} via asymptotic analysis. 
Assuming further that fissures are thin enough as to forgo any transverse variation of velocities, and that the surrounding rock matrix is impermeable, two degrees of freedom at each point remain to be resolved for a complete identification of the velocity field. Another important aspect is that surveys of fissured media at the field scale are usually performed at discrete (albeit numerous) sampling points (see \cite{Tassone} for an imaging method). The three-dimensional geometry of the fissure is then usually modeled as a triangulated surface with nodes located on points of known coordinates. For physical processes that depend on the fissure's geometry like fluid flow, it is therefore desirable to construct models that yield approximate solutions at the resolution of the triangulated surface. 

The considerations laid out in the previous paragraph encompass the main motivation for the present work. Our starting point is the Darcy's equation for the two-dimensional flow velocity and pressure on the region of the X-Y plane limited by the fissure. This approach is hence limited to fissures whose three-dimensional triangulated surface can be orthogonally projected onto a triangulated region of the X-Y plane in a biunivoque way. The flow is driven by the gravitational potential and a prescribed external pressure potential along the domain. We then compute a conservative two-dimensional velocity field that is piece-wise constant on the triangulation elements, and that best approximates the solution to the discretized Darcy's equation. By conservative in this context, we mean a velocity field for which the net discharge along each element boundary vanishes. The final step in the construction is a lifting operation, again conservative, yielding the corresponding three-dimensional velocity field everywhere parallel to the triangulated surface.

The main feature of the proposed model is its simplicity. Through the action of linear operators whose dimension is comparable to the number of elements in the triangulation, our methodology yields approximate streamlines which are parallel among them within each triangle, and everywhere parallel to the triangulated surface. 

We present two applications of the proposed model aimed at quantifying the fissure's geometry effect on flow and transport. First we consider the problem of dispersion of a tracer that is being advected within the fissure by the constructed velocity field. The evolution of the concentration of the tracer can be characterized in discrete time by a linear operator which is explicitly derived here. As a second application, we consider different mechanisms of energy dissipation. Functionals to estimate dissipation rates associated to curvature, friction and gravity are explicitly derived and computed for fissures of different exemplifying geometries. 

We close this section introducing the notation. Vectors in $\R^{\!3}$ are denoted with bold characters  $ \x = (x_1, x_2, x_3) \in \R^3 $, $\vert \x \vert$ indicates the Euclidean norm of $ \x $, $ \x^T $ its transpose, and $\widetilde{x} = (x_{1}, x_{2})$ its projection on the first two coordinates. Projections along vectors or onto subspaces are denoted by the operator $ \Proj $, for instance, if $ \u \in \R^d $,
\begin{equation}\label{Definition projection}
\Proj_{\x}(\u) \defining (\u \cdot \x) \frac{\x}{ \Vert \x \Vert^2} 
\end{equation}
denotes the projection of $ \u $ along the direction of $ \x $. For a given subset $A$ of $ \R^d $, $ d=1,2,3 $,  we denote its Lebesgue measure by $\vert A \vert$, its cardinal by $\#A$, its closure by $\cl(A)$ and its boundary by $ \partial A $. The symbol $ \ind_A $ denotes the indicator function of the set $ A $: $ \ind_A(\x) =1 $ if $ \x \in A $, zero otherwise. 

\section{The triangulated fissure and the class of element-wise constant vector fields}

Our approach rests on the assumption that the fissure's physical surface can be approximated by a piecewise linear affine triangulation constructed from discrete sampled points. Below, we start delimiting the type of fissures to be analyzed.

\begin{definition}\label{Def sample domain and T}
Let $S = \{\widetilde{x}_{1}, \widetilde{x}_{2}, \ldots, \widetilde{x}_{n}\}$ be the set of distinct points in $ \R^2 $, such that for all $ i=1,\dots,n $, the three-dimensional location $ \big(\widetilde{x}_i,\zeta(\widetilde{x}_i) \big) $ of the fissure surface is known; $ S $ will be called the \textbf{sample set}. The \textbf{sample domain} $ \Omega \subset \R^2$ is the convex hull of $ S $. 
\end{definition}

\begin{hypothesis}\label{Hyp Fissure Geometric Conditions}
	We assume that the \textbf{fissure} is a surface $\Gamma = \big\{ \big(\widetilde{x}, \zeta(\widetilde{x}) \big): \widetilde{x}\in G \big\}$ defined by a continuous function $\zeta:G\rightarrow \R$ on a convex domain $G$ of $\R^{2}$ with $ \Omega \subseteq G $. In particular the continuous 2-D manifold $\Gamma$ has an Atlas containing one element.
\end{hypothesis}
The sample domain $ \Omega \subset \R^2$  will be partitioned with a \textbf{triangulation} $ \triang $ constructed as follows. See Figure \ref{Fig 3DCrack}. 
\begin{definition}\label{Def admissible triangular mesh}
The \textbf{triangular mesh} $ \triang $ is a collection of open triangular disjoint subsets of $\Omega$ such that 
if any two triangles $ K,L \in \triang $ satisfy $\vert \partial K \cap \partial L \vert > 0$ then, $\partial K \cap \partial L$ is a common edge. We also introduce the following notation associated to the triangulation $\triang$:  
\begin{enumerate}[(i)]
\item Denote by $\E$ the \textbf{edges} of the triangulation; $\E_{\bdry}$ are the edges of the triangulation contained in the boundary $\partial \Omega$ of the domain and the edges $\E_{\itfc} \defining \E - \E_{\bdry}$ are the interfaces between pairs of elements in $\triang$.

\item Edges in $ \E_{\bdry} $ are regarded as ``boundary elements'' of an extended triangulation $ \triang_{\ext} \defining \triang \cup \E_{\bdry} $. 

\item Any edge $e \in  \E $ is common to two elements, namely $ K \in \triang, L \in \triang_{\ext}$ and it is thus denoted as $ e = K|L = L|K $. Its length is $ \sigma_e \defining |e| $.
  
\item Given an element $ K \in \triang$ and one of its edges $ e = K|L $, $ L \in \triang_{\ext}$, we denote by $ \nuhat_{K | L} $ the unitary vector that is normal to $ e $ and points outwards $ K $. See Figure \ref{Fig 3DCrack}.
\end{enumerate}

\end{definition}
\begin{figure}\centering
\includegraphics[scale = 1]{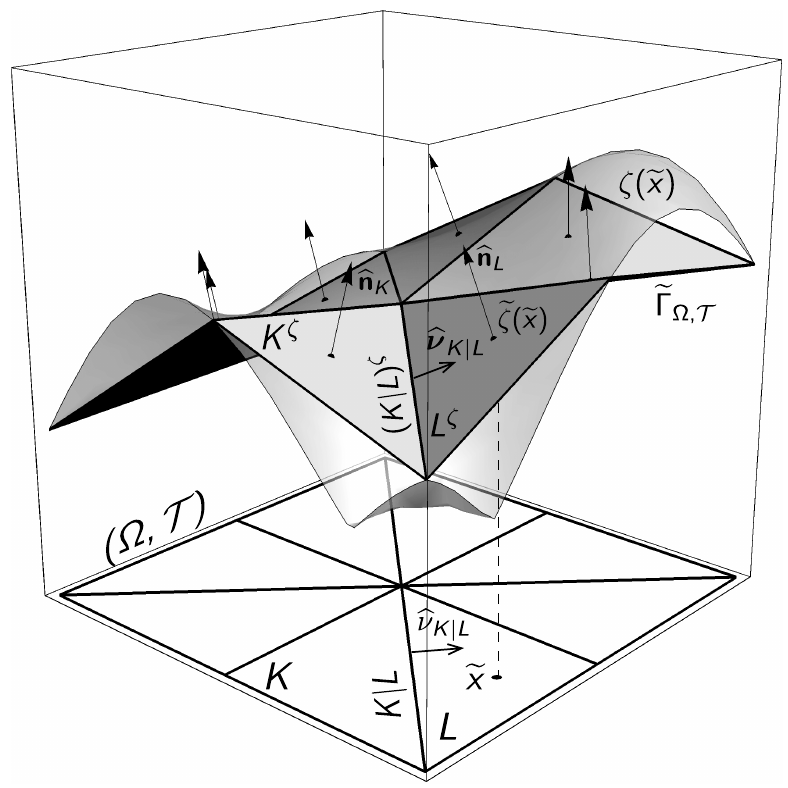}
\caption{Schematic representation of a fissure, a triangulation and its associated elements described in Definitions \ref{Def admissible triangular mesh} and \ref{Def lifting} \label{Fig 3DCrack}}
\end{figure}

In particular, the numerical experiments in Section \ref{Sec Numerical Experiments}, will assume that $\triang$ is a Delaunay triangulation of the sample set $ S $. We now introduce the notation for the associated three-dimensional triangular elements.
\begin{definition}\label{Def lifting}
Let $\Omega$, $ \triang $ be as in Definitions \ref{Def sample domain and T} and \ref{Def admissible triangular mesh}, 
\begin{enumerate}[(i)]
\item Let $K \subseteq \Omega \subset \R^2$ be a triangular element of $ \triang $ with vertices $\{z_{\,\ell}: 1\leq \ell\leq 3\}$. We define the $\Kz$ as the triangle formed by the points $\{(z_{\,\ell}, \zeta(z_{\,\ell})):1\leq \ell\leq 3\}\subset \R^{3}$.
\item For any point $ \widetilde{x}\in \cl(K) $, we define $ \widetilde{\zeta}(\widetilde{x}) $ as the unique point $ \x $ in $\Kz$ such that its horizontal projection agrees with $\widetilde{x}$, namely
$ \x - ( \x \cdot\kversor ) \, \kversor = \widetilde{x} $. In particular, the function $ \widetilde{\zeta} $ agrees with $ \zeta $ on the sample set $ S $.
\item For an edge $ e \in \E $ we define $ e^{\zeta} $ in the analogous way and denote its length by $ \sigma^{\zeta}_{e} $.
\item 
   The \emph{triangulation} of $\Gamma$ relative to $\Omega$ and $\triang$, is given by the collection of the previously defined triangles; it is denoted by
\begin{equation}\label{Def triangulation surface P, T}
 \widetilde \Gamma_{ \Omega, \triang}\defining\bigcup\left\{\Kz: K\in \triang\right\} .
\end{equation}
\item The vector $\n_{K}$ indicates the upwards unitary vector, normal to the surface of the element $\Kz$.
\item For each $K\in \triang, L \in \triang_{ext}$, $\nutilde_{K|L}$ denotes the vector that is normal to both $ \n_{K} $ and the edge $ (K|L)^{\zeta} $, pointing outwards $ \Kz $. 
\end{enumerate}
\end{definition}
We will throughly use vector fields of two distinct types: fields defined over the sample domain $ \Omega $ and with values on $ \R^2 $; the others defined over the triangulated surface $ \Gammatilde $ and with values on $ \R^3 $. Moreover, we will specialize on those vector fields that are constant over the triangles that compose their corresponding domains of definition. The space of  \textbf{element wise constant} flow fields on $ \Omega $ is denoted by
\begin{equation}\label{Eqn Element Wise Constant Space}
\H(\Omega, \triang) \defining\Big\{
\v(\widetilde{x}) = \sum_{K \, \in \,\triang} \v_{K} \, \ind_{K}(\widetilde{x}) : 
\v_{K} \in \R^{2}, \; \text{for all } \, K\in \triang
\Big\},
\end{equation}
and in particular, a vector field in $ \H(\Omega, \triang) $ will be called \textbf{conservative} if it belongs to the space $\V(\Omega, \triang)$ defined as follows,
\begin{equation}\label{Eqn Conservative Space}
	\V(\Omega, \triang) \defining \left\{
	\v  \in \H(\Omega, \triang) \; \text{ such that if }  \vert \partial K \cap \partial L \vert > 0 ,\; \text{then } 
	\v_{K}\cdot \nuhat_{K|L}  =  
	- \v_{L}\cdot \nuhat_{L|K} \right\}.
\end{equation}
Observe that $ \V(\Omega, \triang) $ is a special case of the Raviart-Thomas finite element space (see \cite{BraessFEM}). We also define the space of \textbf{conservative potentials} $E(\Omega, \triang)$ by
\begin{equation}\label{Eqn Conservative Potential Space}
E(\Omega, \triang) \defining \big\{
q \in P_{1}(\Omega, \triang) : 
\grad q \in \V(\Omega, \triang)
\big\} .
\end{equation}
%
%
Equivalently, if $\grad: P_{1}(\Omega, \triang)\rightarrow \H(\Omega, \triang)$, then $E(\Omega, \triang) \equiv \grad^{-1}\big(\V(\Omega, \triang)\big)$. 

Among three-dimensional vector fields defined on the triangulated surface $ \Gammatilde $, we will be interested on element-wise constant vector fields that are everywhere parallel to the surface. We introduce the space
\begin{equation}
\widetilde{\H}(\Omega,\triang) \defining\Big\{
\w(\x) = \sum_{K \, \in \,\triang} \w_{K} \, \ind_{K}(\x)  : 
\w_{K} \in \R^3, \; \w_{K} \cdot \n(K) = 0 \text{  for all } \, K\in \triang, \; \x \in \Omega \Big\}, 
\end{equation}
as well as the corresponding space of conservative vector fields
\begin{equation}\label{Eqn Conservative 3D Space}
	\widetilde{\V}(\Omega, \triang) \defining \left\{
	\w  \in \widetilde{\H}(\Omega, \triang) \; \text{ such that if }  \vert \partial K \cap \partial L \vert > 0 ,\; \text{then } 
	\w_{K}\cdot \nutilde_{K|L}  =  
	- \w_{L}\cdot \nutilde_{L|K} 
	\right\} .
\end{equation}

%
\section{Construction of the velocity field}\label{Sec basic velocity field}
%
%
In this section we construct a family of conservative velocity fields hosted on the triangulated surface $ \widetilde\Gamma_{\Omega, \triang} $ and defined by the surface's geometry as well as the pressure potential. This is accomplished in two steps. First, a two-dimensional \textbf{master conservative vector field} $ \v \in \V(\Omega, \triang) $ is calculated from the equations of fluid flow in porous media. Second, the vector field is lifted to the surface $ \Gammatilde $ in such a way that the resulting \textbf{global velocity field} $ \u $ lands in the space $ \widetilde{\V}(\Omega, \triang) $.

Our starting poing is the saturated porous media flow equation on $ \Omega $: 

\begin{subequations}\label{Eq Porous Media Equation}
	\begin{equation}\label{Eq Darcy Equation}
	a\, \v +  \, \grad p = - \g \, \rho \,   \grad \zeta - \grad P , 
	\end{equation}
	\begin{equation}\label{Eq Conservative Equation}
	\div \v = 0  , \quad \text{in }   \Omega .
	\end{equation}
	\begin{equation}\label{Eq Drained Boundadry Conditions}
	p = 0 , \quad\text{on }  \partial \Omega . 
	\end{equation}
\end{subequations}
Here, $a
$ is the \textbf{flow resistance} (the fluid viscosity times the inverse of the medium permeability tensor), $P$ is the external pressure potential, $\g$ is the gravity, $\rho$ is the fluid density and $ \zeta $ is the function defining the fissure in Definition \ref{Hyp Fissure Geometric Conditions}. The Problem \eqref{Eq Porous Media Equation} above is composed by the Darcy constitutive equation \eqref{Eq Darcy Equation}, the mass conservation equation \eqref{Eq Conservative Equation} and the drained (Dirichlet) boundary conditions \eqref{Eq Drained Boundadry Conditions}. The external pressure gradient $\grad P$ and the geometric gradient $\grad \zeta$ are incorporated as driving forces in Equation \eqref{Eq Darcy Equation}. Finally, Equation \eqref{Eq Conservative Equation} states that the system is free of sources and therefore conservative. 

It is a well-known fact that the System \eqref{Eq Porous Media Equation} is well-posed (see \cite{Gatica, BraessFEM}) in the continuous case, consequently it can be solved in a discrete setting associated to a sampling triangulation $\triang$ using numerical analysis methods e.g., mixed finite element methods (see, \cite{Gatica}). In order to attain the numerical solution of Equation \eqref{Eq Porous Media Equation} consistent with the requirement $ \v \in \V(\Omega,\triang) $, we must define the spaces to which the numerical approximations of $ p,  P $ and $ \zeta $ must belong. The \textbf{linear space} associated to $\Omega, \triang$ is defined as
\begin{equation}\label{Eqn Linear Affine Space}
P_{1}(\Omega, \triang) \defining\Big\{
q = \sum_{K \, \in \,\triang} q_{K} \, \ind_{K} : q \quad \text{continuous and }\, 
q_{K} \in P_{1}(K), \quad \text{for all } \, K\in \triang \Big\} ,
\end{equation}
where
\begin{equation}\label{Eqn Linear Affine Polynomials Element}
P_{1}(K) \defining\big\{
ax + b y + c
 : a, b, c\in \R
\big\}.
\end{equation}
Clearly, the approximated surface $ \widetilde{\zeta} $ belongs to $ P_1(\Omega, \triang) $. Moreover, if $ q \in P_1(\Omega,\triang) $, then $ \grad q \in \H(\Omega, \triang) $. 

\subsection{Construction of the master conservative velocity field $ \v $}

Let $ P \in P_1(\Omega, \triang) $ be given. The construction of the master conservative flow field $ \v \in \V(\Omega,\triang) $ is accomplished in three steps. First, compute (see \cite{BradjiHerbin, BraessFEM, LeVeque}) the unique numerical solution $ p_0 \in P_1(\Omega, \triang)$ to 
\begin{align}\label{Eqn Direct Formulation Porous Media}
 -\div \big( \grad p\big) &=  \rho g \, \div \grad \widetilde{\zeta} + \div \grad P \quad \text{in } \Omega, \\
  p &= 0\quad \text{on } \partial \Omega .
\end{align}
Next, compute the \textbf{primary velocity field} $\v_{0} \in \H(\Omega, \triang)$ associated to $\Gamma$ as
\begin{equation}\label{Def Primary Flow Field}
\v_{0} \defining - \frac{1}{a} \sum\limits_{K\, \in \, \triang} 
\big( \grad p_{0} 
+ \grad P
+ \rho g \,\grad \widetilde{\zeta} \, \big) \, \ind_{K}.
\end{equation}
Finally, in order to guarantee conservation of mass, define the \textbf{master conservative velocity field} $\v\in \V(\Omega, \triang)$ as
\begin{equation}\label{Def Conservative Flow Field}
\v \defining 
\text{Orthogonal Projection of the primary velocity field }\, \v_{0} \in \H(\Omega, \triang)\; \text{onto the space }\, \V(\Omega, \triang).
\end{equation}

Theorem \ref{Th Projection Characterization} below provides a simple linear algebra approach for the computation of $ \v $. First we need some auxiliary definitions.

\begin{definition}\label{Def The Characterizing Matrix}
Let $ \{K_{\ell}: 1 \leq \ell \leq \#\triang\} $ and $\{ e_{i}: 1\leq i \leq \#\E\}$ be two enumerations of the sets $\triang$ and $\E$ respectively. 
\begin{enumerate}[(i)]
\item Given any element-wise constant vector field $ \w =  \sum\limits_{\ell \, = \,1}^{\#\triang} \w_{K_{\ell}} \, \ind_{K_{\ell} }  $, the following $ \Numb $ map gives the concatenation of its values as column vectors: 
\begin{equation}\label{Eq Numbering Map}
 \Numb: \H(\Omega, \triang) \rightarrow \R^{2\#\triang}, \quad
 (\Numb\w)\, (j) \defining \begin{cases}
\w_{K_{\ell}} \cdot \iversor \, , & j = 2\, \ell - 1 \, , \\
 \w_{K_{\ell}} \cdot \jversor \, , & j = 2\, \ell  \, .
\end{cases}
\end{equation}

\item The \textbf{characterizing matrix} $A\in \R^{\#\E \times 2\#\triang}$ of the triangulation $ \triang $ has the entries of the normal outwards vectors $ \nuhat_{K \vert L } $, organized as follows
\begin{align}\label{Eq The Characterizing Matrix}
A (i, j) \defining \begin{cases}
\nuhat_{K_{\ell}|L_i}\cdot \iversor \, ,  & j = 2\ell - 1  , \, K_{\ell} \cap L_i \neq \emptyset \, ,\\
\nuhat_{K_{\ell}|L_i}\cdot \jversor \, ,  & j = 2\ell , \, K_{\ell} \cap L_i \neq \emptyset \, ,\\
0 , & \text{otherwise} \, .
\end{cases}
\end{align}
\end{enumerate}
\end{definition}

\begin{theorem}\label{Th Projection Characterization}
Let $\v_{0}$, $\v$ be the primary and master conservative velocity fields as defined in Equations \eqref{Def Primary Flow Field}, \eqref{Def Conservative Flow Field} respectively, then
\begin{equation}\label{Eq Commutativity of Operators} 
\v= (\Numb)^{-1}\Big(I - A^{T}\big(A A^{T}\big)^{-1} A\Big) \Numb\v_{0} .
\end{equation}
\end{theorem}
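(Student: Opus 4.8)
The plan is to transport the entire statement through the coordinate isomorphism $\Numb$ and reduce it to the classical linear-algebra formula for the orthogonal projection onto the kernel of a matrix. Throughout I would equip $\H(\Omega,\triang)$ with the inner product $\langle \v, \w\rangle \defining \sum_{K\in\triang} \v_{K}\cdot\w_{K}$, so that $\Numb$ is by construction a linear isometric isomorphism onto $\R^{2\#\triang}$ with its Euclidean structure. This is the inner product with respect to which the orthogonal projection in \eqref{Def Conservative Flow Field} is to be understood, and it is precisely what makes the unweighted formula \eqref{Eq Commutativity of Operators} plausible (the natural $L^{2}$ inner product would carry element-area weights and produce a different, weighted projector).

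The first substantive step is to identify the conservative subspace with a kernel, namely to prove
\[
\Numb\big(\V(\Omega,\triang)\big) = \ker A .
\]
To that end I would fix an edge $e_{i}$ and compute the $i$-th coordinate of $A\,\Numb\v$ directly from \eqref{Eq The Characterizing Matrix} and \eqref{Eq Numbering Map}: only the column blocks of the elements adjacent to $e_{i}$ contribute, and the block of an adjacent $K_{\ell}$ contributes exactly $\v_{K_{\ell}}\cdot\nuhat_{K_{\ell}|L_{i}}$. Hence for an interface edge $e_{i}=K|L$ one obtains $(A\,\Numb\v)(i) = \v_{K}\cdot\nuhat_{K|L} + \v_{L}\cdot\nuhat_{L|K}$, which vanishes precisely when the conservation condition in \eqref{Eqn Conservative Space} holds. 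Matching this edge by edge, with due attention to the orientation of the outward normals and to how the boundary edges are treated, yields the claimed equality of subspaces.

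With $W \defining \Numb(\V) = \ker A$ in hand, the conclusion follows from the standard fact that the Euclidean orthogonal projector onto $\ker A$ is $P_{W} = I - A^{T}(AA^{T})^{-1}A$: the matrix $A^{T}(AA^{T})^{-1}A$ is the projector onto the row space of $A$ (the range of $A^{T}$, which is $(\ker A)^{\perp}$), as one checks by verifying it fixes every vector $A^{T}c$ and annihilates every element of $\ker A$. Since $\v$ is the orthogonal projection of $\v_{0}$ onto $\V$, applying $\Numb$, projecting, and applying $\Numb^{-1}$ gives $\v = \Numb^{-1}\big(I - A^{T}(AA^{T})^{-1}A\big)\Numb\v_{0}$, which is \eqref{Eq Commutativity of Operators}.

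The step I expect to be the main obstacle is justifying that $AA^{T}$ is actually invertible, equivalently that $A$ has full row rank. This is not automatic: the rows of $A$ are the per-edge outward normals, and because the outward unit normals of any single triangle, weighted by the corresponding edge lengths, sum to $\zero$, the edge-length vector $(\sigma_{e})_{e}$ is a natural candidate for a linear dependence among the rows. Ruling this out — or restricting the rows of $A$ to the interface edges $\E_{\itfc}$, so that the boundary-adjacent elements break such dependences — is exactly the point that requires the geometry and connectivity of the triangulation, and is where I would concentrate the careful work. Should full row rank fail, the clean inverse $(AA^{T})^{-1}$ must instead be read as a Moore–Penrose pseudoinverse, which leaves the projector $P_{W}$ unchanged.
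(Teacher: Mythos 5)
Your proof follows the paper's route exactly: conjugate by $\Numb$, identify $\Numb\big(\V(\Omega,\triang)\big)$ with $\ker A$ edge by edge, and invoke the least-squares formula $\Proj_{\ker A} = I - A^{T}\big(AA^{T}\big)^{-1}A$; this is precisely the paper's argument. Where you go beyond it is in the two hypotheses you flag, and both flags are warranted. First, the paper never says with respect to which inner product the projection in \eqref{Def Conservative Flow Field} is orthogonal; its proof tacitly works with the Euclidean structure after applying $\Numb$, i.e.\ with your unweighted pairing $\sum_{K}\v_{K}\cdot\w_{K}$ --- under the area-weighted $L^{2}(\Omega)$ pairing the projector would carry the weights $\vert K\vert$ and \eqref{Eq Commutativity of Operators} would be false as written, so making this explicit genuinely tightens the statement. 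Second, the paper only proves $\ker A \neq \{\zero\}$ (a statement about columns) and never addresses the full row rank needed for $\big(AA^{T}\big)^{-1}$ to exist; your suspected dependence is real: with one row per edge of $\E$, as Definition \ref{Def The Characterizing Matrix} literally prescribes, the divergence identity $\sum_{e\subset \partial K} \sigma_{e}\, \nuhat_{K|e} = \zero$ on each triangle shows the edge-length vector $(\sigma_{e})_{e\in\E}$ lies in the left null space of $A$, so $AA^{T}$ is singular. Moreover, boundary-edge rows would impose no-flux conditions $\v_{K}\cdot\nuhat_{K|e}=0$ not present in \eqref{Eqn Conservative Space}, so even the identification $\Numb(\V)=\ker A$ requires restricting the rows to $\E_{\itfc}$ --- which is what the paper's proof implicitly does (its constraints and its counting bound $\#\E_{\itfc} < \tfrac{3}{2}\#\triang$ use only interface edges). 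On those rows, full rank follows from the connectivity argument you sketch (elements with a boundary edge kill the per-triangle dependence, which then propagates through the dual graph), and in any case your Moore--Penrose reading leaves the projector, hence the theorem, intact. In short: same approach as the paper, plus the justifications the paper omits.
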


\begin{proof}
Note first that $\Numb\big[\V(\Omega, \triang)\big] = \ker(A)$, and therefore $ \dim \big[\V(\Omega, \triang)\big] = \dim \big[\ker (A)\big] > 0  $. Indeed, if $\w \in \V(\Omega, \triang)$, and $K, L_i \in \triang$ are two triangles sharing the edge $e\in \E_{\itfc}$, then the constraint in Definition \eqref{Eqn Conservative Space} of $\V(\Omega, \triang)$ can be written as  $\mathbf{a}_{i}\cdot \Numb (\w) = 0$ where $ \mathbf{a}_i $ is the is the $i$-th row vector of the matrix $A$. Since this holds for all $e\in \E_{\itfc}$, this is equivalent to $ A \, \Numb (\w) = \mathbf{0} $. The fact that $ \ker(A) $ is not trivial, follows from the inequality $\# \E_{\itfc} < \dfrac{3}{2} \, \#\triang < 2 \, \#\triang$.

In order to obtain \eqref{Eq Commutativity of Operators} from \eqref{Def Conservative Flow Field} it is enough to find a matrix representing the linear transformation $\Proj_{\ker (A)}$. 
From standard linear algebra theory, the fundamental spaces of the matrix $A$ satisfy
\begin{align}\label{Eq Orthogonality Fundamental Spaces}
& \ker(A) \perp \col\big(A^{T}\big), & 
& \ker(A) \oplus  \col\big(A^{T}\big) =  \R^{2\#\triang} ,
\end{align} 
where $\col\big(A^{T}\big)$ is the column space of $A^{T}$ and $\oplus$ indicates the direct sum of vector spaces. Therefore, the projection is characterized by
\begin{equation*}
 \big\Vert \x - \Proj_{\col (A)} \x  \big\Vert 
= \min \big\{ \big\Vert \x - A^{T}\y \big\Vert :\y\in \R^{\#\E} \big\}, \quad
 \text{for all }\, \x \in \R^{2\#\triang}\, ,
\end{equation*} 
with $\Vert \cdot \Vert$ the second order mean norm. From least squares standard theory we know that $\Proj_{\col (A)} \x = A^{T}\,\big(AA^{T}\big)^{-1}A \, \x$ for all $\x\in \R^{2\#\triang}$. Finally, due to the orthogonality of the fundamental spaces in \eqref{Eq Orthogonality Fundamental Spaces}, we have that
\begin{equation*}
 \Proj_{\ker (A)} \x = \Big(I - A^{T}\,\big( AA^{T}\big)^{-1}A\Big) \x, \quad   
 \text{for all }\, \x \in \R^{2\#\triang}\, . 
\end{equation*} 
From here the Identity \eqref{Eq Commutativity of Operators} follows trivially.
\end{proof}

\begin{remark}\label{Rem Necessity of Projections}
A different approach would compute a numerical solution $(\v_{0}, p_{0})$ to Problem \eqref{Eq Porous Media Equation} using a dual mixed finite element method (see \cite{BraessFEM, Gatica}). In this case $\v_{0}$ already belongs to a Raviart-Thomas space $\boldsymbol{RT}_{\!\ell}(\Omega, \triang)$ of certain degree $\ell\geq 0$. In particular, the primary velocity field $\v_{0}$ is already numerically conservative; consequently, it is equal to the master conservative velocity field i.e., $\v = \v_{0}$ and, there is no need of the projection used in Theorem \ref{Th Projection Characterization}. 
\end{remark}

\subsection{The lifting operator and the global velocity field $ \u $}

Our next step is to define, based on the master flow field $\v$, a new conservative three-dimensional flow field $ \u  \in \widetilde{V}(\Omega, \triang)$. The basic procedure consists in, for each element $K\in \triang$, find an appropriate 3-D axis around which rigidly rotate horizontal vectors, so that $ \v_K $ ends up being parallel to the lifted triangular element $ K^{\zeta} $. In addition, the magnitude of the velocity vector, normal to the boundary of the element, is not altered and mass conservation is thus ensured. 

Fix an element $ K \in \triang $ and let $ \n_K $ be the unit normal vector to the corresponding element $ K^{\zeta} \in \Gammatilde $ as in Definition \ref{Def lifting}, see Figure \ref{Fig vk2uk}. Let $ \v_K $ denote the value of the master flow field $ \v $ on $ K $. If $ \Kz $ is horizontal, we simply set $ \u_K = \v_K $. Otherwise, we choose as a rotation axis the following vector
\begin{equation}
  \zhat_K\defining  \frac{\n_K\times \kversor}{\vert\n_K\times \kversor\vert}. 
 \end{equation}
The component of $ \u_K $ that is parallel to $ \zhat_K $ is set equal to that of $ \v_K $, namely $ \Proj_{\zhat_K}(\u_K) = \Proj_{\zhat_K}(\v_{K})$. Now, let $ \Lambda_K $ be the plane normal to $ \zhat_K $, and let $ \theta_K $ be the angle formed from $ \kversor $ to $ \n_K $ on the plane $ \Lambda_K $. The component of $ \u_K $ perpendicular to $ \zhat_K $ is then taken equal to the vector obtained from rotating $ \Proj_{\Lambda_k}(\v_{K})$ by the angle $ \theta_K $.

\begin{figure}\centering
\includegraphics[scale = 0.85]{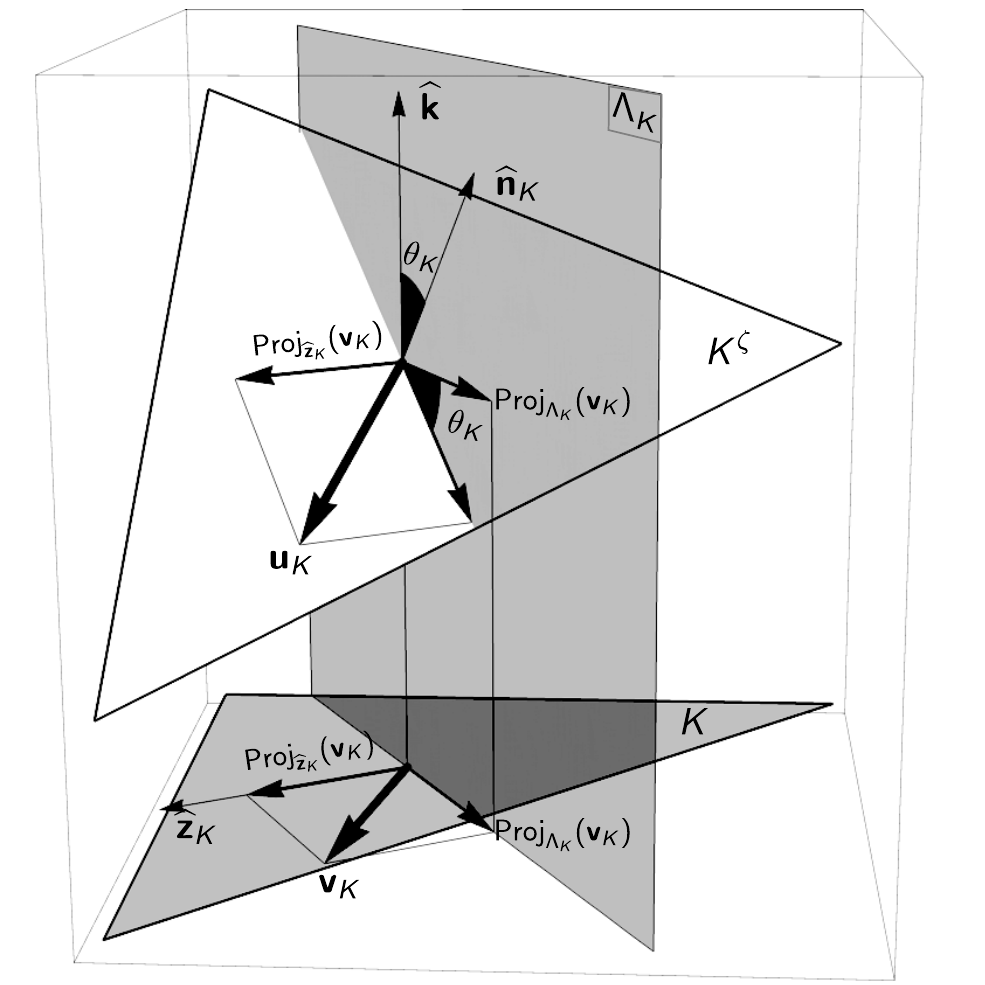}
\caption{Schematics of the lifting of the conservative field $ \v_K $ to the global velocity field $ \u_K $ for a fixed element $ K $. \label{Fig vk2uk}}
\end{figure}

The \textbf{lifting operator map}, $F_{K}: \R^{2}\rightarrow \R^{3}$, $ \v_{K}\mapsto \u_K$ is linear and it is explicitly written as:
\begin{equation}\label{Eq local velocity due curvature 3-D}
 \u_K 
\defining \left[\begin{array}{cc}
1- \dfrac{n_{1}^{\,2}}{1+n_{\,3}}   & -\dfrac{n_{1}\,n_{\,2}}{1+n_{\,3}}\\[8pt]
-\dfrac{n_{1}\,n_{\,2}}{1+n_{\,3}} & 1- \dfrac{n_{\,2}^{\,2}}{1+n_{\,3}}\\[8pt]
-n_{1} & -n_{\,2}
\end{array}
\right]
\v_K \defining F_{K} \v_K
, \quad \n_K =\left[\begin{array}{c}
									n_1\\[1ex]
									n_2\\[1ex]
									n_3
					\end{array}\right].
\end{equation} 
The \textbf{global velocity field} $\u$ is then defined by 
\begin{equation}\label{Eq global velocity due curvature 3-D}
\u 
(\x) \defining \sum_{K \,\in \, \triang} \u_K
\ind_{K}(\x), \quad \x \in \Omega,
\end{equation}

\begin{remark}\label{Rem comments on the velocity field}
\begin{enumerate}[(i)]
\item The construction of the lifting operator $F \defining \sum\limits_{K \,\in \, \triang} F_{K}
\ind_{K}(\x), \quad \x \in \Omega$ \eqref{Eq local velocity due curvature 3-D} is based on preserving the magnitudes of the vectors on every direction normal to the boundary of $ \Kz $. Consequently, since the master conservative field $\v$ belongs to $\V(\Omega, \triang)$ then, its lifting $\u$ belongs to $ \widetilde{\V}(\Omega, \triang) $, 
i.e. $\u$ is conservative.

\item 
Given two master conservative fields $\v_{1}, \v_{2} \in \V(\Omega, \triang)$, a direct calculation shows that 
\begin{equation}\label{Eq orthogonality conservation}
\v_{1} (\x) \cdot \v_{2}(\x) =
\u (
\v_{1}, \x)\cdot \u(
\v_{2}, \x)
 \quad \text{for all}\; 
 \x\in \Omega.
\end{equation}
\end{enumerate}
\end{remark}

\subsection{Mean streamline length}\label{Sec Mean streamline}
Within each lifted element $ K^\zeta $, the streamlines of the field $ \u $ are parallel. In the context of energy dissipation discussed in Section \ref{Sec Applications}, we will need the average length $ \boldsymbol{d}_K(\u) $, of such streamlines. 

Consider first the triangle $ K \in \triang $ shown in Figure \ref{Fig calculation_dk}. It is an elementary fact that the average streamline length within $ K $ of the field $ \v_K $ is equal to $ d_K(\v) = \tfrac{1}{2} \Vert \boldsymbol{y}_K(\v) \Vert $ where $ \Vert \boldsymbol{y}_K(\v) \Vert$ is the maximum length of segments parallel to $ \v_K $ contained in $ K $. Clearly $ y_K(\v) = \alpha_K \v_K $ for some $ \alpha_K > 0 $. The desired distance is then obtained by applying the operator $ F_{K} $ of \eqref{Eq local velocity due curvature 3-D} to $ \boldsymbol{y}_K $,
\begin{equation}\label{Def mean streamline distance}
\boldsymbol{d}_K(\u) =  \frac{1}{2} \, \Vert F_{K}  \boldsymbol{y}_K(\v) \Vert = \frac{1}{2} \, \alpha_K \Vert \u_k \Vert.
\end{equation}

We now provide an algorithm to compute $ \alpha_K $. Let $ e_1,e_2,e_3 $ denote the edges of element $ K $, and consider the products $ f_i = (\nuhat_{e_i} \cdot \v_K) $. If any of the $ f_i $ equals zero, then there is an edge vector $ \boldsymbol{e}_i $ parallel to $ \v_K $ and we make $ \boldsymbol{y}_K(\v) = \boldsymbol{e}_i $, $ \alpha_K = \Vert \boldsymbol{e}_i \Vert / \Vert \boldsymbol{v}_K \Vert $. Otherwise the $ f_i $ satisfy the following property: there is a unique edge $ e_{\v,1} $ for which its corresponding $ f_{\v,1}=(\nuhat_{e_{\v,1}} \cdot \v_K) $ has sign different from the other two, namely $ f_{\v,1} $ is the unique strictly negative or strictly positive number among $ f_1, f_2, f_3 $. The vector $ \v_K $ enters or exits $ K $ through the edge $ e_{\v,1} $ according to $ \text{sign}(f_{\v,1}) $. Let $ \boldsymbol{e}_{\v,1}, \boldsymbol{e}_{\v,2}, \boldsymbol{e}_{\v,3} $ denote the edge vectors of $K$ transversing $ \partial K $ in the counter-clockwise direction and starting with the selected edge $ e_{\v,1} $. There exist unique positive constants $ \alpha_K, \beta_K $ such that
\begin{equation}
\boldsymbol{e}_{\v,3} + \beta_K \boldsymbol{e}_{\v,1} - \text{sign}(f_{\v,1}) \alpha_K \v_K = \boldsymbol{0}.
\end{equation}

\begin{figure}\centering
\includegraphics[scale = 0.9]{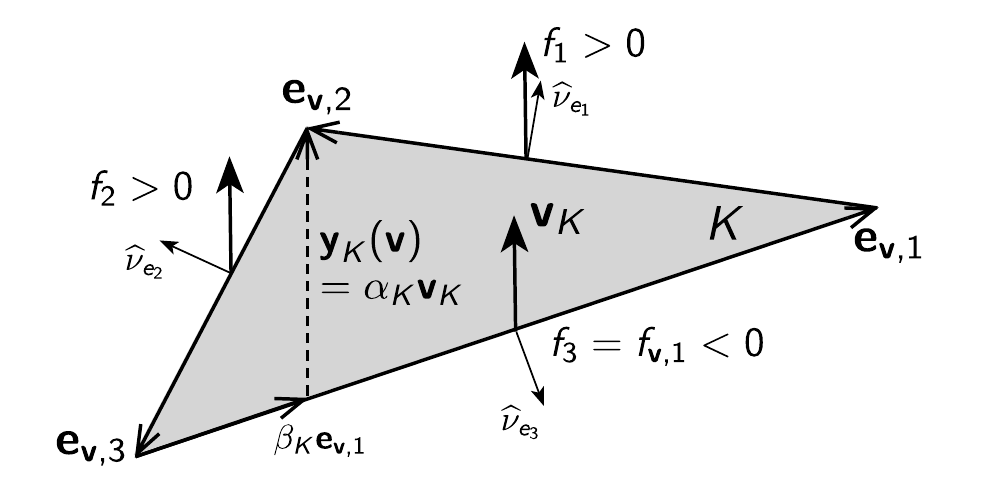}
\caption{Calculation of the mean streamline length of the field $ \v_K $. The vector $ \boldsymbol{y}_K(\v) = \alpha_K \v_K $ is shown as dashed. In this case the $ \boldsymbol{y}_K $ enters $ K $ through $ e_{\v,1} $ because $ f_3 = f_{\v,1} < 0 $. \label{Fig calculation_dk}}
\end{figure}

%
\section{Transport along the surface}\label{Sec Markov Chains}
%
%
In this section we derive a stochastic model for the linearized transport of a solute which is being advected along the surface by the global velocity field $ \u $. The starting point is the linear transport equation for a solute of concentration per unit volume $ c(x,t) $ which is being advected by the velocity field $ \u $ and initially distributed according to an initial concentration $ c_0 $:
\begin{align} \label{Eqn Transport Equation}
& \partial_{t} c(\x, t) + \div (\u \, c)(\x,t) = 0 \quad  c(\x,0) = c_{0}(\x), \quad \x \in \widetilde \Gamma_{ \Omega, \triang}  .
\end{align}

Upon discretization of the surface by the triangulation $ \triang $ and integration over each element, the following approximation to \eqref{Eqn Transport Equation} is obtained
\begin{equation}\label{Eqn Discrete Transport}
\frac{d \mathbf{c}}{dt}  = Q \mathbf{c}(t), \quad \mathbf{c}(0) = \mathbf{c}^{(0)},
\end{equation}
where now $ \mathbf{c} $ and $ \mathbf{c}^{0} $ denote vectors in $ \mathbb{R}^{\# \triang_{\ext}} $ with entries given by the total mass of solute over each element, 
\begin{equation}
c_K(t) = \int_{K^\zeta \times D} c(\x,t) d \x, \quad c^{(0)}_K = c_K(0).
\end{equation}
The integration is carried over the volume $ K^\zeta \times D $ where $ 0< D \ll 1 $ is the depth of flow (i.e. the thickness of the fissure) and is assumed to be constant throughout. The matrix $ Q \in \R^{\# \triang_{\ext} \times \#\triang_{\ext}}$ in \eqref{Eqn Discrete Transport} corresponds to the discretized version of the divergence operator, and is given by

\begin{align}\label{Def Q}
Q_{K,K} &  \defining 
- \!\!\! \sum_{\substack{L \in \triang_{ext} \\ |\partial L \cap \partial K| >0 }} 
  \frac{\sigma^\zeta_{K|L}}{|K^\zeta|}  \; (\u_K \cdot \nutilde_{K|L})^+, \quad K \in \triang_{\ext};\\
Q_{K,L} &\defining 
  \frac{\sigma^\zeta_{K|L}}{|K^\zeta|} \; (\u_K \cdot \nutilde_{K|L})^+ , \quad  \text{ if } K,L \in \triang_{\ext}, \; K \neq L, \;  |\partial K \cap \partial L| > 0, 
\end{align}
where $ z^+ = \max\{ z, 0\} $ for $ z \in \R. $

The solution to \eqref{Eqn Discrete Transport} is
\begin{equation}\label{Eqn Solution Discrete Transport}
\mathbf{c}(t) = \exp(Q t) \, \mathbf{c}^{(0)} \defining T(t) \mathbf{c}^{(0)}, \quad t \geq 0.
\end{equation}

\begin{remark}
\begin{enumerate}[(i)]
\item Each entry of the matrix $ Q $ has units of fraction of solute mass per unit time. The rows corresponding to elements in $ \triang_{\ext} - \triang $ contain zero in all its entries.
\item Note that because of the construction of the global velocity field, the terms $ (\u_K \cdot \nutilde_{K|L}) $ can be replaced throughout \eqref{Def Q} by the dot product $ (\v_K \cdot \nuhat_{K|L}) $ of vectors in $ \mathbb{R}^2 $. 
\item  In contrast, the cross sectional area used in the computation of $ Q $ must be computed with the distance $ \sigma_e^\zeta $. 
\end{enumerate}
\end{remark}

It is important to point out that, in the case where $ \sum\limits_{K \, \in \, \triang} c^{(0)}_K  = 1$, the concentration $ \mathbf{c}(t) $ in \eqref{Eqn Solution Discrete Transport} can be understood as the distribution of a stochastic process $ X = \{X(t): t\geq 0\} $ initially distributed according to $ \mathbf{c}^{(0)} $ and with phase space given by the elements of $ \triang_{\ext} $. Namely,
\begin{equation}\label{key}
c_K(t) = \prob \left(X(t) = K \big| X(0) \sim \mathbf{c}^{(0)} \right)
\end{equation}
and $ X(t) $ thus represents a random model for the position of an individual molecule of the solute being advected. The elements in $ \triang_{\ext} - \triang $  are absorbing states. The randomness in this stochastic process comes from the uncertainty in the geometry of the fissure which is reflected in the approximation of $ \u_K $ to the real velocity field on the points of the surface. In this context, the matrix $ Q $ is the ``infinitesimal generator'' of $ X $ while the family of ``transition probability matrices'' of the process $\{ T(t): t \geq 0\} $, defined in Identity \ref{Eqn Solution Discrete Transport}, is given by
\begin{equation}\label{Eqn trasition probs}
	T_{K,L}(t) \defining \prob\big(X(t) = L \big| X(0) = K\big), \quad K,L \in \triang_{\ext}.
\end{equation}
 See Chapter IV in \cite{WaymireBhattacharya}.

\subsection{The geometry of transport}

The matrix $ Q $ provides a description of the rates of transport forced by the velocity field $ \u $ along the surface. It therefore gives detailed information about which parts of the surface are accessible to the flow from any initial position. To be precise, let $ \widetilde{Q} $ be the transition probability matrix of the Markov chain associated to $ X $ (see \cite{WaymireBhattacharya}, V.5)
\begin{align}\label{Def Qtilde}
\widetilde{Q}_{K,L} = \frac{Q_{K,L}}{-Q_{K,K}} \; \text{ if } Q_{K,K} \neq 0, &\quad 
\widetilde{Q}_{K,L} = 0 \text{ if } Q_{K,K} = 0, \quad  K\neq L;\\
\widetilde{Q}_{K,K} = 0 \; \text{ if } Q_{K,K} \neq 0, &\quad  
\widetilde{Q}_{K,K} = 1 \text{ if } Q_{K,K} =0
\end{align}
and let $ \graph $ be the directed weighted graph whose adjacency matrix is $ \widetilde{Q} $. We will refer to $ \graph $ as the ``flow graph'' associated with the velocity field $ \u $. 

We say that the element $ K $ is upstream from the element $ L $ and denote it by $ K \to L $, if $ (\widetilde{Q}^n)_{K,L} > 0 $ for some $ n \geq 1 $, or equivalently, there exists a path in $ \graph $ connecting $ K $ with $ L $. It follows from the standard theory of Markov processes that $ K \to L $ implies $ T_{K,L}(t) > 0 $ for all $ t > 0 $. In other words, it is likely for a particle originally within triangle $ K $ to be transported to the element $ L $.

A particular feature of the flow graph $ \graph $ generated by $ \u $ is that it is a forest, a collection of trees similar to a river network driven by the gravitational potential over the landscape.

\begin{theorem}\label{Th cycle free graph}
The unique cycles in $\graph$ have length one. In particular, no element $K\in  \triang$ is downstream of itself.
\begin{proof}
First we observe that for any $\v\in \V(\Omega, \triang)$ there exists $q\in E(\Omega, \triang)$ such that $\v = - \grad q$ (see Identity \eqref{Eqn Conservative Potential Space}). Now we proceed to prove the result by way of contradiction, let $\{L_{i}: 0 \leq i \leq j \}$ be a cycle in $\graph$. Define $K \defining L_{0} = L_{j}$ and denote by $e_i = L_{i-1}|L_{i}$, $1\leq i \leq j$ the sequence of edges joining subsequent elements of the cycle . Also define the midpoints $\x_{i}$ of $e_{\,i}$ and the unitary vectors $\nuhat_{e_i}$ outwards $L_{i - 1}$ and orthogonal to $e_{\,i}$. Notice that since $L_{0}, L_{1}, \ldots, L_{j}$ are successors then $Q_{L_{i - 1}, L_{i} } > 0$ for $i = 1, 2, \ldots, j$. In particular, since $\v_{L_{i}} = - \grad q\big\vert_{M_{i}} $, due to the definition of $Q_{M_{i - 1}, M_{i} }$ this implies that $q(x_{i - 1}) \lneqq q(x_{i})$ for all $i = 1, 2, \ldots, j$. Therefore, because of the continuity of $q$ it follows that 
\begin{equation*}
q(x_{0}) \lneqq q(x_{1}) \lneqq \ldots \lneqq
q(x_{j}) \lneqq q(x_{0}) .
\end{equation*}
Since this is a contradiction, the proof is complete.
\end{proof}
\end{theorem}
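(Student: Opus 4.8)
The plan is to exploit that the master field is a gradient and that fluid always moves toward lower potential, so that following a directed edge of $\graph$ strictly lowers a single-valued scalar; a directed cycle would then return to a strictly smaller value of that scalar, which is impossible. First I would record, as in the opening line of the argument, that the conservative field is a gradient: there is a continuous, element-wise affine potential $q\in E(\Omega,\triang)$ with $\v=-\grad q$, which is exactly the content of the definition \eqref{Eqn Conservative Potential Space}. Using the observation that in \eqref{Def Q} the products $\u_K\cdot\nutilde_{K|L}$ may be replaced by the planar products $\v_K\cdot\nuhat_{K|L}$, the condition $Q_{K,L}>0$ defining the directed edge $K\to L$ becomes $\v_K\cdot\nuhat_{K|L}>0$, i.e. $\grad q\big|_K\cdot\nuhat_{K|L}<0$: the potential decreases as one crosses from $K$ to $L$. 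I would then argue by contradiction, assuming a cycle $L_0,L_1,\dots,L_j=L_0$ with $j\ge 2$ and $Q_{L_{i-1},L_i}>0$ for every $i$.

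The heart of the matter is a per-element lemma: within each triangle $L_i$ of the cycle the potential strictly decreases from the inflow edge to the outflow edge. Let $e_i=L_{i-1}|L_i$ be the edge through which the flow enters $L_i$ and $e_{i+1}=L_i|L_{i+1}$ the edge through which it leaves, and let $\x_i,\x_{i+1}$ be their midpoints. Since $q$ is affine on $L_i$, its value at an edge midpoint equals the average of its two endpoint values. The edges $e_i,e_{i+1}$ share a vertex $w$ (any two edges of a triangle meet), so writing $P,P'$ for the remaining endpoints of $e_i,e_{i+1}$ one gets $q(\x_i)-q(\x_{i+1})=\tfrac12\big(q(P)-q(P')\big)$, and the claim reduces to $q(P)>q(P')$. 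I would establish this by the sign analysis already used in Section \ref{Sec Mean streamline}: among the three normal components $\v_{L_i}\cdot\nuhat_e$ over the edges $e$ of $L_i$ there is a unique ``odd one out'', so the edges split either as two outflow and one inflow, or one outflow and two inflow. Ordering the three vertex potentials and matching them against this splitting pins down $P$ (the far endpoint of the inflow edge) as the higher and $P'$ (the far endpoint of the outflow edge) as the lower, giving $q(P)>q(P')$.

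With the lemma in hand the contradiction is immediate: summing the strict inequalities $q(\x_i)>q(\x_{i+1})$ cyclically around $L_0,\dots,L_j=L_0$ yields $q(\x_1)>q(\x_2)>\cdots>q(\x_j)>q(\x_1)$, which is absurd. Equivalently, the closed polygonal path $\x_1\to\x_2\to\cdots\to\x_j\to\x_1$ satisfies $\oint \grad q\cdot d\boldsymbol{\ell}=0$ because $q$ is single-valued, yet every segment contributes a strictly negative increment. Hence no cycle of length $\ge 2$ can exist, and the only cycles are the length-one self-loops that $\widetilde{Q}$ assigns to the equilibrium/absorbing elements with $Q_{K,K}=0$; this is precisely the assertion, and in particular no $K\in\triang$ is downstream of itself.

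The step I expect to be delicate is the \emph{strictness} in the per-element lemma, i.e. ruling out $q(P)=q(P')$. Equality would make the edge $PP'$ an equipotential of $q$, forcing $\v_{L_i}$ perpendicular to $PP'$; but then the outward normals of $e_i=wP$ and $e_{i+1}=wP'$ have components along $\v_{L_i}$ of the \emph{same} sign, so both edges would be inflow edges or both outflow edges. That contradicts their roles as the distinct entry and exit edges of a genuine cycle step, so the equality case never occurs and each step is strict. This is exactly the degeneracy flagged in Section \ref{Sec Mean streamline} (flow parallel to an edge), and the above remark is what prevents it from breaking the telescoping. The only other point to keep honest is the representation $\v=-\grad q$, which I would take directly from the definition of $E(\Omega,\triang)$.
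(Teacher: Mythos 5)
Your proposal follows the same route as the paper's own proof --- represent $\v=-\grad q$ with $q\in E(\Omega,\triang)$, show that $q$ evaluated at the midpoints of the successive crossing edges is strictly monotone, and telescope around the cycle --- and you correctly isolated the per-element strict comparison as the crux (the paper simply asserts it ``due to the definition of $Q$''). But that per-element lemma is false, and with it both your telescoping and your line-integral variant. Counterexample: let $K$ have vertices $w=(0,0)$, $P=(4,0)$, $P'=(1,1)$, let $\v_K=(1,2)$ and $q(\widetilde{x})=-\v_K\cdot\widetilde{x}$ on $K$. The outward normals are $\nuhat_{wP}=(0,-1)$, $\nuhat_{wP'}=\tfrac{1}{\sqrt{2}}(-1,1)$, $\nuhat_{PP'}=\tfrac{1}{\sqrt{10}}(1,3)$, giving $\v_K\cdot\nuhat_{wP}=-2<0$, $\v_K\cdot\nuhat_{wP'}=\tfrac{1}{\sqrt{2}}>0$, $\v_K\cdot\nuhat_{PP'}=\tfrac{7}{\sqrt{10}}>0$: a legitimate one-inflow/two-outflow element, with the cycle entering through $wP$ and exiting through $wP'$. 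Yet $q(P)=-4<q(P')=-3$, so the far endpoint of the inflow edge is the \emph{lowest} vertex, not the highest, and at the midpoints $q(2,0)=-2<q(\tfrac12,\tfrac12)=-\tfrac32$: the potential strictly \emph{increases} from inflow midpoint to outflow midpoint. Since for $w=(0,0)$, $P=(1,0)$, $P'=(0,1)$ with $\v_K=(-1,2)$ the same comparison goes strictly the other way, the sign of $q(\x_i)-q(\x_{i+1})$ is simply not determined by the in/out pattern: neither your decreasing chain nor the paper's increasing chain $q(x_0)\lneqq\cdots\lneqq q(x_j)$ holds, and your ``odd one out'' sign analysis cannot pin down the vertex ordering because no such ordering exists. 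The segments joining midpoints are not flow-aligned, so their increments of $q$ have no fixed sign; your strictness discussion addresses only the degenerate equality case, not this.

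There is a second gap, which you flagged but accepted, and which is inherited from the paper: the representation $\v=-\grad q$ with $q\in E(\Omega,\triang)$. Identity \eqref{Eqn Conservative Potential Space} only defines $E(\Omega,\triang)$ as the preimage $\grad^{-1}\big(\V(\Omega,\triang)\big)$; it does not make $\grad$ surjective onto $\V(\Omega,\triang)$, and in fact surjectivity fails badly. Continuity of $q\in P_1(\Omega,\triang)$ forces the tangential derivatives of $q$ to agree across every interior edge, while $\grad q\in\V(\Omega,\triang)$ forces the normal components to agree; hence $\grad q\big\vert_K=\grad q\big\vert_L$ for every adjacent pair, and $\grad q$ is a single constant vector on the connected domain $\Omega$. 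Only constant fields admit such a potential, whereas the master field of \eqref{Def Conservative Flow Field} is in general non-constant. Worse, conservativity alone genuinely does not imply acyclicity: on the diamond $\vert x\vert+\vert y\vert\leq 1$ triangulated by the four triangles $T_1,\dots,T_4$ meeting at the origin (one per quadrant), the field $\v_{T_1}=(-1,1)$, $\v_{T_2}=(-1,-1)$, $\v_{T_3}=(1,-1)$, $\v_{T_4}=(1,1)$ belongs to $\V(\Omega,\triang)$ and its flow graph contains the directed cycle $T_1\to T_2\to T_3\to T_4\to T_1$. Any correct proof must therefore use some structural property of how $\v$ is actually produced (the projection of the gradient-type field $\v_0$), which neither your proposal nor the paper's proof supplies.
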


\subsection{The time to reach the boundary}

One advantage of the stochastic formulation is that we can explicitly consider dynamic quantities regarding the motion of individual solute particles, and their aggregated properties. Most significantly, consider the random time it takes a particle to reach $ \partial \triang$,
\begin{equation}\label{Def exit time}
\tau = \inf\{t \geq 0: X(t) \in \partial \triang \}
\end{equation}
and its distribution conditioned to having started within element $ K \in \triang $, which can be characterized via
\begin{equation}\label{Def Distribution tau}
	\Phi_K(t) \defining\prob\big(\tau>t \big| X(0) = K \big), \quad K \in \triang.
\end{equation}

The states in $ \triang_{\ext} - \triang $ are absorbing, then it follows that
\begin{equation}\label{Eqn Distribution tau}
	\Phi_K(t) = \sum_{L \, \in \, \triang} T_{K,L}(t), \quad \Phi(t) = T(t) \ones_\triang
\end{equation}
where, in the second equation, $ \Phi(t) $ denotes the vector with entries $ \{\Phi_K(t): K \in \triang\} $ and $ \ones_\triang $ is the vector with ones on the entries corresponding to the triangles in $ \triang $ and zeros everywhere else, see \cite{WaymireBhattacharya} IV.10.

Since $ \tau $ is a positive random variable, its expectation can be computed directly from $ \Phi $. Define 
\begin{equation}\label{Def Expectation tau}
	\Psi_K \defining \mathbb{E}(\tau | X(0) = K), 
\end{equation}
and let $ \Psi $ be the vector with entries $ \{\Psi_K(t): K \in \triang\} $. One thus arrives to the following useful formula:
\begin{equation}\label{Eqn Psi}
	\Psi = \int_0^\infty \exp(Q t) \, \ones_\triang \, dt.
\end{equation} 
Even though it can be shown that $ \Psi < \infty $, no closed formula is possible for the integral in \eqref{Eqn Psi} because $ Q $ is by construction, a singular matrix.

%
%
%
%
\section{Applications and Numerical Examples}\label{Sec Applications}
%
%
In this section we present three important applications of constructed the flow field. The first one is the computation of the preferential flow direction for a layered medium. The second one is a computation of energy dissipation hosted on the surface. The third one is a computation of probable stream lines through the surface. 
%
%
%
%
\subsection{Preferential Flow Direction}\label{Sec Preferential Flow Direction}
%

%
Let $ \u $ be the global velocity field defined in \eqref{Eq global velocity due curvature 3-D}. The \textbf{preferential flow direction} of the medium is given by
\begin{align}\label{Eqn Preferential Flow Direction}
& \m[\u] \defining \frac{1}{\vert \widetilde{\Gamma} \vert }
\sum_{K \, \in\, \triang } | \Kz | \, \u_K.  
\end{align}
The vector $ \m[\u] \in \R^3 $ is an estimate of the mean velocity of whole fissure.
%
%
\subsection{Energy Dissipation}\label{Sec Energy Dissipation}
%
%
Three separate physical mechanisms of energy dissipation are considered: Curvature, Friction and Gravity. For each mechanism, a functional of energy dissipation is constructed for computational purposes. 
\begin{definition}\label{Def Curvature Energy Dissipation}
 The \textbf{energy dissipation rate due to curvature} is given by the total kinetic energy of such deviation, weighted according the flow rate (see \cite{Batchelor, Bear}), i.e.

\begin{equation}\label{Def curvature energy dissipation}
U_{\curv} (\u) \defining 
 \frac{1}{ 2 } \, \rho D \, \displaystyle{\sum_{K \in \triang}} \; 
\sum_{\substack{L \in \triang \\ |\partial L \cap \partial K| >0 }}
 \big(|K^\zeta| \, \widetilde{Q}_{K,L} + |L^\zeta| \, \widetilde{Q}_{L,K} \big)\, 
\big\Vert ( \u_K\cdot \nutilde_{K|L}) \; \nutilde_{K|L}
- (\u_L \cdot \nutilde_{L|K}) \; \nutilde_{L|K} \big\Vert^{2}. 
\end{equation}

Here, $\rho$ is the density of the fluid, $g$ is the gravity.
\end{definition}
\begin{remark}
\begin{enumerate}[(i)]
\item
Notice that although the conservative flow field $\v$  is continuous in the normal direction across the interfaces $e\in \E_{\itfc}$. Due to the lifting, the vertical component of 
the global flow field $\u$ deviates from one element to its neighbor, because of the curvature of the surface $\Gamma$. Therefore, certain amount of  energy has to invested for the fluid to reach the configuration of its neighbor, the Identity \eqref{Def curvature energy dissipation} quantifies the total of this needed energy per unit time.

\item
Observe that the factor $D \big(|K^\zeta| \, \widetilde{Q}_{K,L} + |L^\zeta| \, \widetilde{Q}_{L,K} \big)$ in the Identity \eqref{Def curvature energy dissipation} quantifies the fraction of volume per unit time that crosses the edge $K|L$ because one of the summands is necessarily null, i.e. $\widetilde{Q}_{K,L}  \widetilde{Q}_{L,K} = 0$.

\end{enumerate}
\end{remark}
Next we define the friction and gravity dissipation energy functionals. Recall that the dissipation head of mechanical energy due to friction (see \cite{ZhangNemcik}) is given by the Darcy-Weisbach type equation 
\begin{equation}\label{Eq Darcy-Weisbach}
\gamma \, \frac{ l }{D} \, \frac{1}{2g}\,\Vert \u \Vert^{2} .
\end{equation}
Here $\gamma$ is a dimensionless friction coefficient depending on the material of the walls of the fissure and the viscosity of the fluid, $ l $ is the average length of the fluid path, and $D$ is the layer height and $\Vert \u\Vert$ is the average magnitude of the velocity. In order to convert the head \eqref{Eq Darcy-Weisbach} into a dissipation rate we will need the total discharge flowing out of element $ K^\zeta $ defined as:%

\begin{equation}\label{Eq qK}
	\boldsymbol{q}_K \defining - D |K^\zeta| \, Q_{K,K}
\end{equation}
where $ Q_{K,K} $ is defined in \eqref{Def Q}.

\begin{definition}\label{Def Friction Energy Dissipation}
The total \textbf{energy dissipation rate due to friction} is given by
\begin{equation}\label{Def friction energy dissipation}
U_{\fric} (\u) \defining 
\frac{1}{2} \, \rho\, \gamma \sum_{K\,\in\, \triang} 
\boldsymbol{q}_K\,
 \frac{\boldsymbol{d}_{K}(\u)}{D} \, \Vert \u_{K} \Vert^{2} .
\end{equation}
Here $\boldsymbol{q}_K$ is the discharge rate outwards the element $\Kz$ defined in the Identity \ref{Def Q} and $\boldsymbol{d}_{K}(\u)$ is the mean streamline length constructed in section \ref{Sec Mean streamline}. 
\end{definition}
For the quantification of the energy dissipation due to gravity, we fix $ K \in \triang $ and compute the increase or decrease of potential energy experienced by the flow field inside $ K $. The mean change in potential energy due to a layer of flow moving in the direction $ \u_{K} $ can be computed as $ \tfrac{1}{2} (F \boldsymbol{y}_K) \cdot \kversor = \tfrac{1}{2} \alpha_K (\u_K \cdot \kversor) $ where $ \tfrac{1}{2} \vec{y}_K(\v) = \tfrac{1}{2} \alpha_K \v_K$ is the mean streamline length found in section \ref{Sec Mean streamline}. We thus define:
\begin{definition}\label{Def Gravity Energy Dissipation}
The total \textbf{variation of potential energy rate} is given by 
\begin{equation}\label{Eq total external energy}
U_{\grav} (\u) 
\defining    \frac{1}{2} \,  \rho\, g   \sum_{K\,\in\, \triang}  \boldsymbol{q}_K\, \alpha_K (\u_K \cdot \kversor) 
\end{equation}
\end{definition}
\begin{remark}\label{Rem Nature of U grav}
Notice that unlike the physical mechanisms of curvature and friction where $U_{\curv}$, $U_{\fric}$ are quantities of energy dissipation, the functional $U_{\grav}$ may report and increment of potential energy depending on $\v$.  
\end{remark}
%
%
%
%
%
%
\subsection{Numerical Experiments}\label{Sec Numerical Experiments}
%
%
Here we present two numerical examples to illustrate the measurement of the physical quantities proposed throughout this section as well as the boundary reaching times. The two examples below correspond to different surfaces, in each case we perform two experiments, one with a null pressure potential, i.e., $ P = 0$ and the other with a logarithmic pressure potential $P = 4000 \, \log ((x - h)^2 + (y - k)^2)$, corresponding to a well centered at the point $C = \begin{pmatrix}
h \\ k
\end{pmatrix}$; notice that the pressure potential is the fundamental solution of the 2-D Laplace equation at the point $C$. 
For the graphics, we display side by side the potentials $\rho\, \g \, \zeta$, $\rho\,\g\, \zeta + P$, below each other the associated conservative flow fields are depicted and finally, the corresponding expected boundary reaching times per element are shown. The experiments are executed in a MATLAB script and use a Delaunay triangulation, composed of 322 elements for the first example and 1046 elements for the second example, where the vertices were generated randomly. 

Due to the scales of the phenomenon we choose the CGS system of units, for both examples we choose the values: water density $\rho = 100 \, kg/m^{3}$, gravity $g = 9.81 \, m/s^{2}$, water layer thickness $D= 0.01 m$, Darcy-Weishbach coefficient $\gamma = 0.03$ and flow resistance $a = 1.3071*10^{3} m^{3}s/g$, the geometric domain is the unit square i.e., $G = [0,1]^{2}$. In contrast, the numerical values describing the overall behavior of the surface, displayed in Tables \ref{Tb Example 1},  \ref{Tb Example 2} of Examples \ref{Ex Verification Example} and \ref{Ex Exploration Example} respectively, are presented in the CGS system, due to its scale. These values are the average velocity $\m[\u(\v)]$, the energy losses $U_{\curv}$, $U_{\fric}$, $U_{\grav}$ and the average expected time (average of the elements' expected time), denoted by $\m\big[\mathbb{E}(\tau | X(0) = K)\big]$. We also stress that in this context, positive amounts of energy indicate dissipation, while a negative sign means that such quantity is still available in the form of kinetic energy. 
\begin{example}\label{Ex Verification Example}
The first example is for verification purposes. The surface is given by the plane $\zeta: G\rightarrow \R$, $\zeta(x, y) =0.8\, (\sin(2\pi\, x) \,\exp(-2\pi\,y) + y)$. For the first experiment we set $P \equiv 0$ and for the second, a pressure potential $P = 4000  \log ((x - 110)^2 + (y + 10)^2 )$ i.e., an extraction well beyond the upper left corner of the graphic. Notice that both potentials are harmonic functions (see Figures \ref{Fig Surface and Time Expected Ex 1} (a) and (b) respectively), therefore the master velocity fields are a multiple the corresponding gradients and the global conservative fields are the lifting of such gradients multiplied by $\frac{1}{a}$. These facts can be observed on Figures \ref{Fig Surface and Time Expected Ex 1} (c) and (d). Also notice that the expected boundary reaching times are consistent with the a-priori intuitive notion as Figures \ref{Fig Surface and Time Expected Ex 1} (e) and (f) show. The next table summarizes the overall behavior of the surface for both potentials.
\begin{table}[h!]
\def\arraystretch{1.4}
\begin{center}
\begin{tabular}{ c c c c c c c}
    \hline
Experiment  & 
$\m[\u(\v)]$ &
$U_{\curv}$ & 
$U_{\fric} $ & 
$U_{\grav}$  &
Energy Balance &
$\m\big[\mathbb{E}(\tau | X(0) = K)\big]$ \\
           & 
[cm/s]  &
[erg/s] & 
[erg/s] & 
[erg/s] &
[erg/s] &
[s] \\[2pt]
\toprule
\shortstack{Gravity\\driven only} & 
$\begin{pmatrix}
   0.0028 \\
  -0.3548 \\
  -1.0626 \\
\end{pmatrix}$  &
   231.7744 &
   2123.0304 &
  -103578.4 &
  -101223.7 &
134 
\\[20pt] 
\shortstack{Gravity and \\pressure driven} & 
$\begin{pmatrix}
  -0.4228 \\
   0.0236 \\
  -0.83512 
\end{pmatrix}$  &
   262.4044 &
   2142.613 &
  -70541.36 &
  -68136.34 &
105 
\\ 
    \hline
\end{tabular}
\end{center}
\caption{Example \ref{Ex Verification Example}: Energy Dissipation Table, 322 Elements.}
\label{Tb Example 1}
\end{table}

\noindent From the table above it follows that for both potentials, the main driving force is the gravity. For the first case, the order of scales states that $U_{\curv} \ll U_{\fric} \ll U_{\grav}$. In the second case the well is meant to force the flow uphill, which is done up to an extent, this is reflected as the value of $U_{\grav}$ contracts in 32 percent approximately, while $U_{\curv}$ and $U_{\grav}$ do not change significantly with respect to itself hence, the order of scales is changed to $U_{\curv} \ll U_{\fric} \sim U_{\grav}$. Finally, it is observed that the average expected reaching time decreases in 20 percent from the first to the second case as now there are two main evacuation points.
%
\begin{figure}
        \centering
        \begin{subfigure}[Forcing Potential Surface $ \rho\, \g \, \zeta$.]
                {\resizebox{6.5cm}{6.5cm}
{\includegraphics{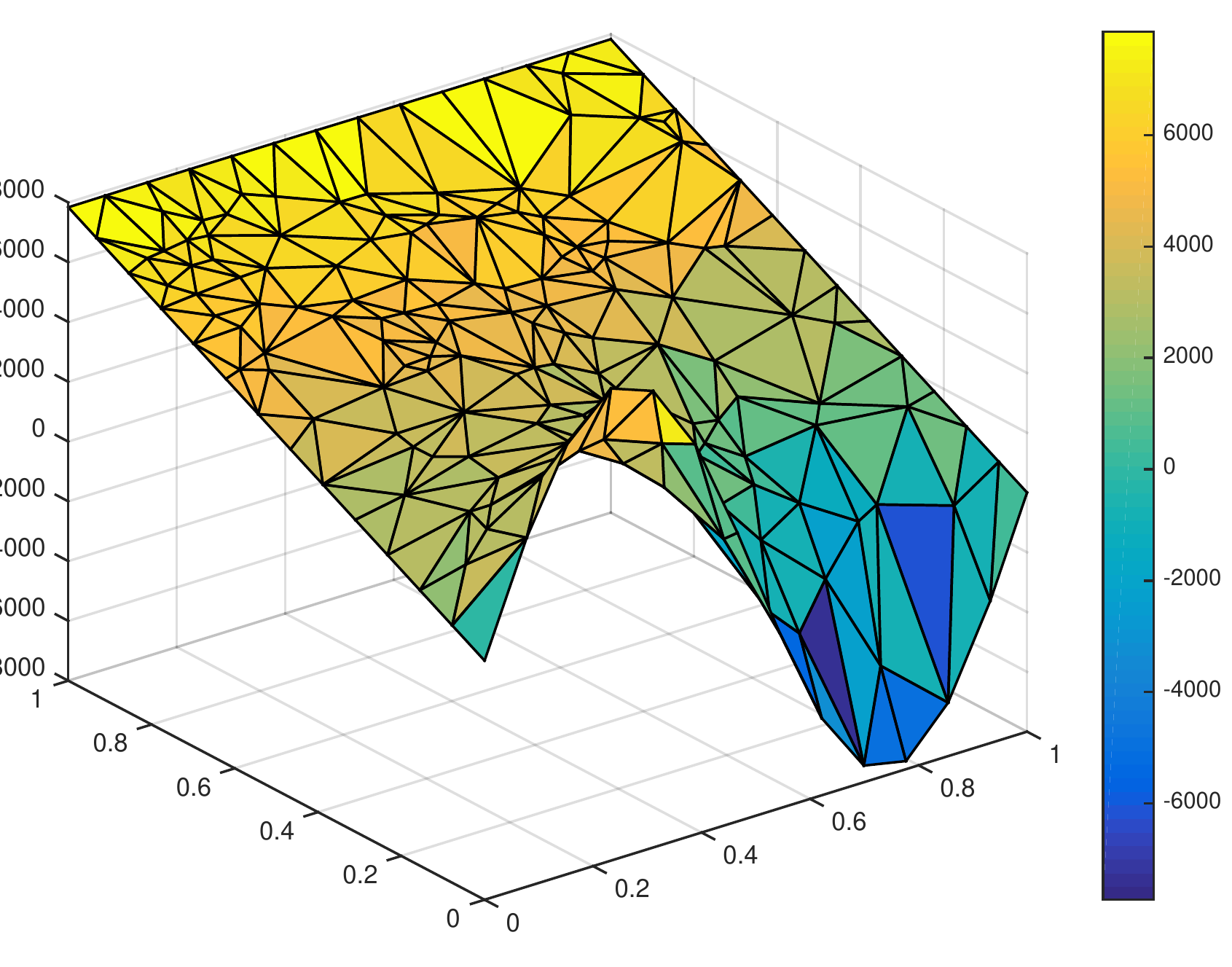} } }
        \end{subfigure}
        \qquad
        ~ 
          \begin{subfigure}[Forcing Potential Surface $\rho\,\g\, \zeta + P$.]
                {\resizebox{6.5cm}{6.5cm}
{\includegraphics{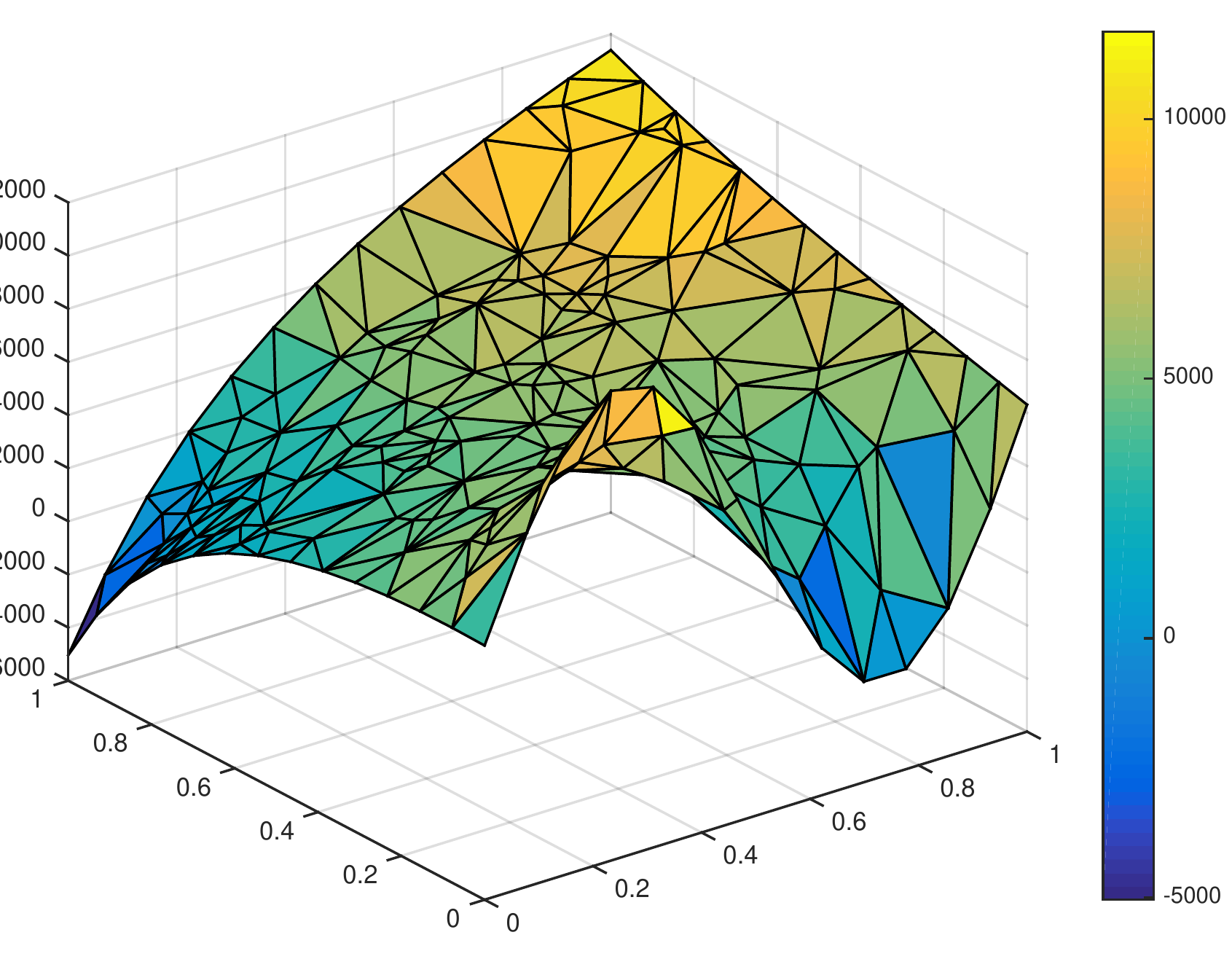} } }
        \end{subfigure}%
        \\
        \begin{subfigure}[Flow Field gravity driven flow.]
                {\resizebox{6.5cm}{6.5cm}
{\includegraphics{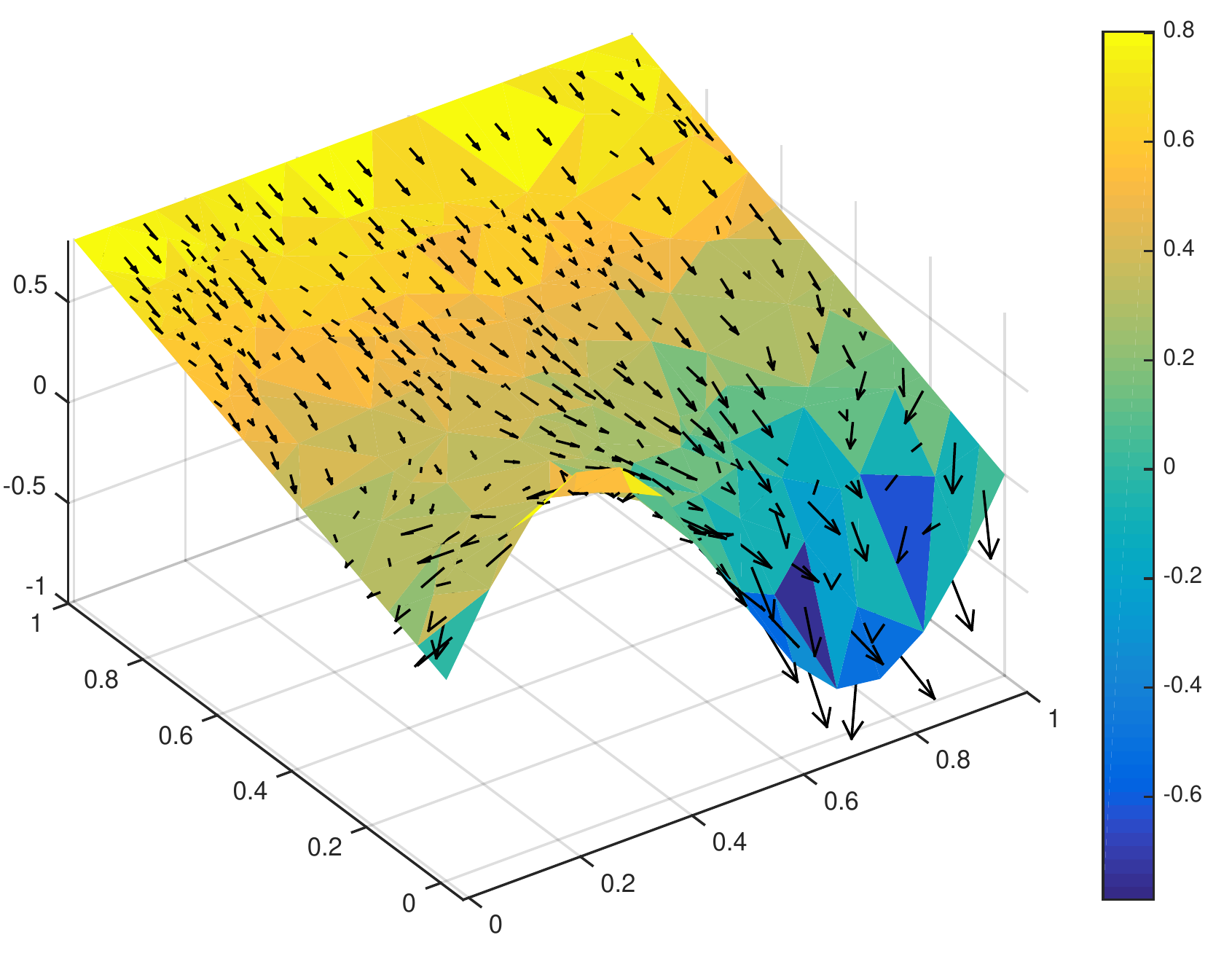} } }
        \end{subfigure}
        \qquad
        ~ 
          \begin{subfigure}[Flow Field gravity and pressure driven flow.]
                {\resizebox{6cm}{6.6cm}
{\includegraphics{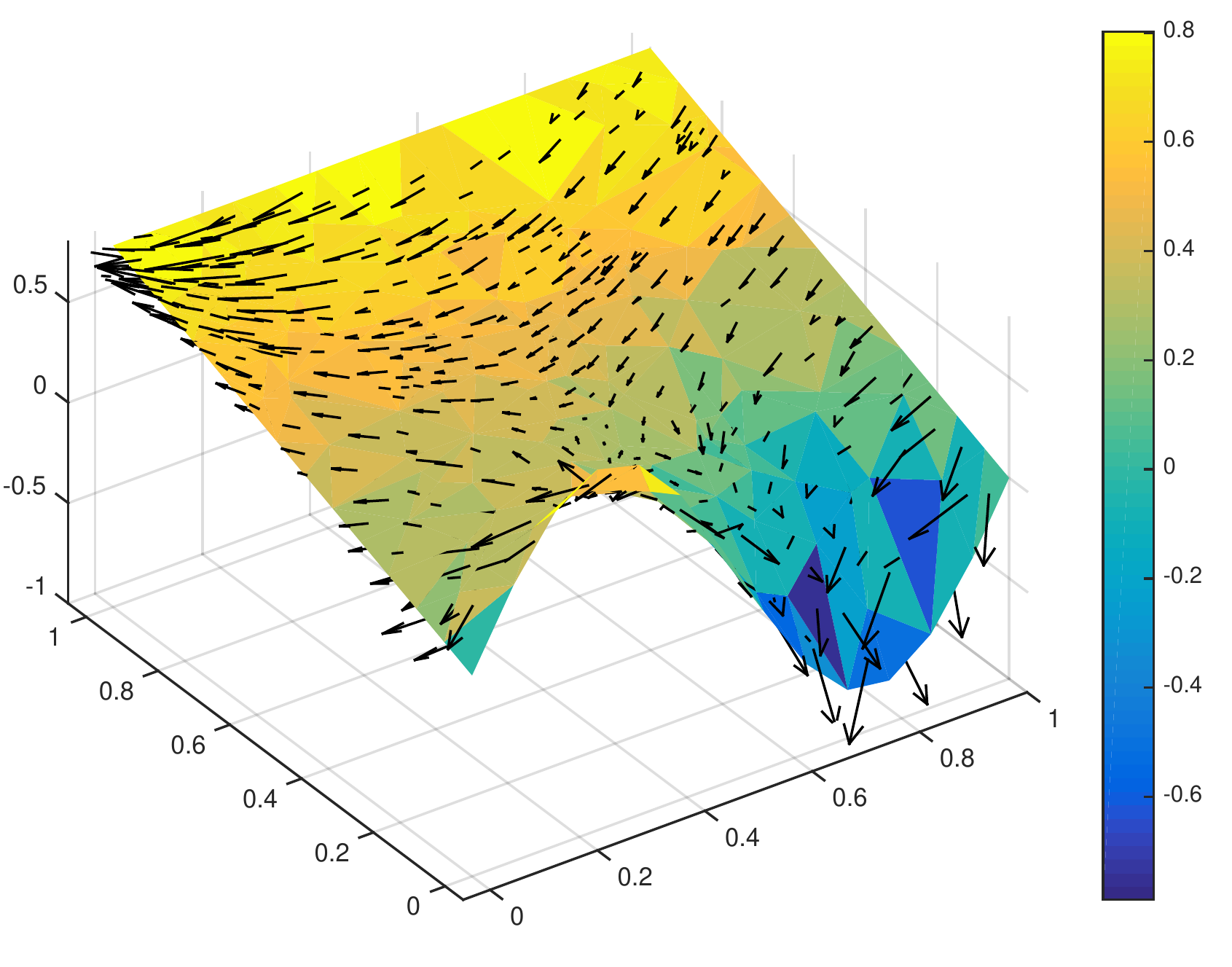} } }
        \end{subfigure}%
\\
        \begin{subfigure}[Expected boundary reaching times \newline gravity driven flow.]
                {\resizebox{6.5cm}{6.5cm}
{\includegraphics{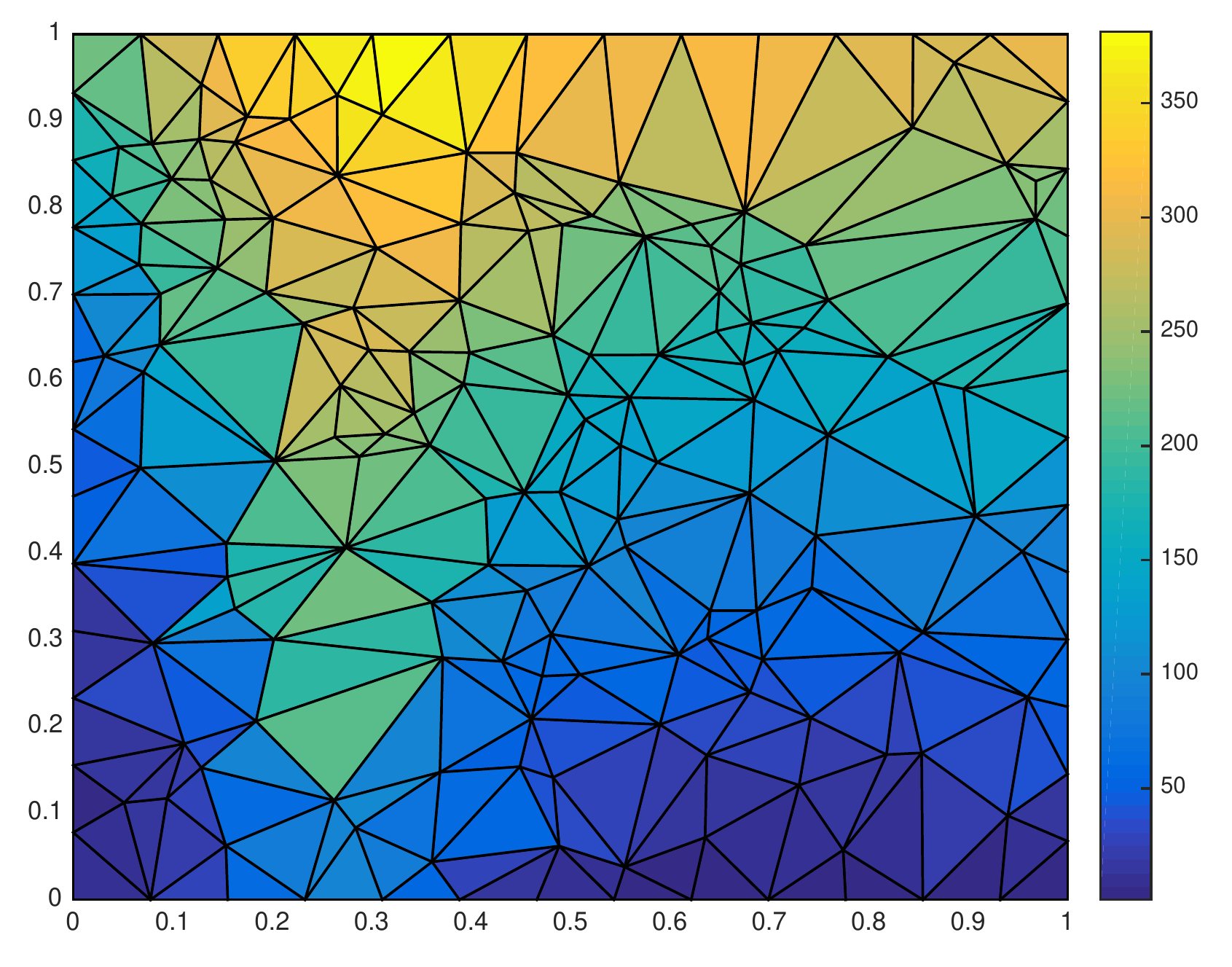} } }
        \end{subfigure}
        \qquad
        ~ 
          \begin{subfigure}[Expected boundary reaching times \newline gravity and pressure driven flow.]
                {\resizebox{6.5cm}{6.5cm}
{\includegraphics{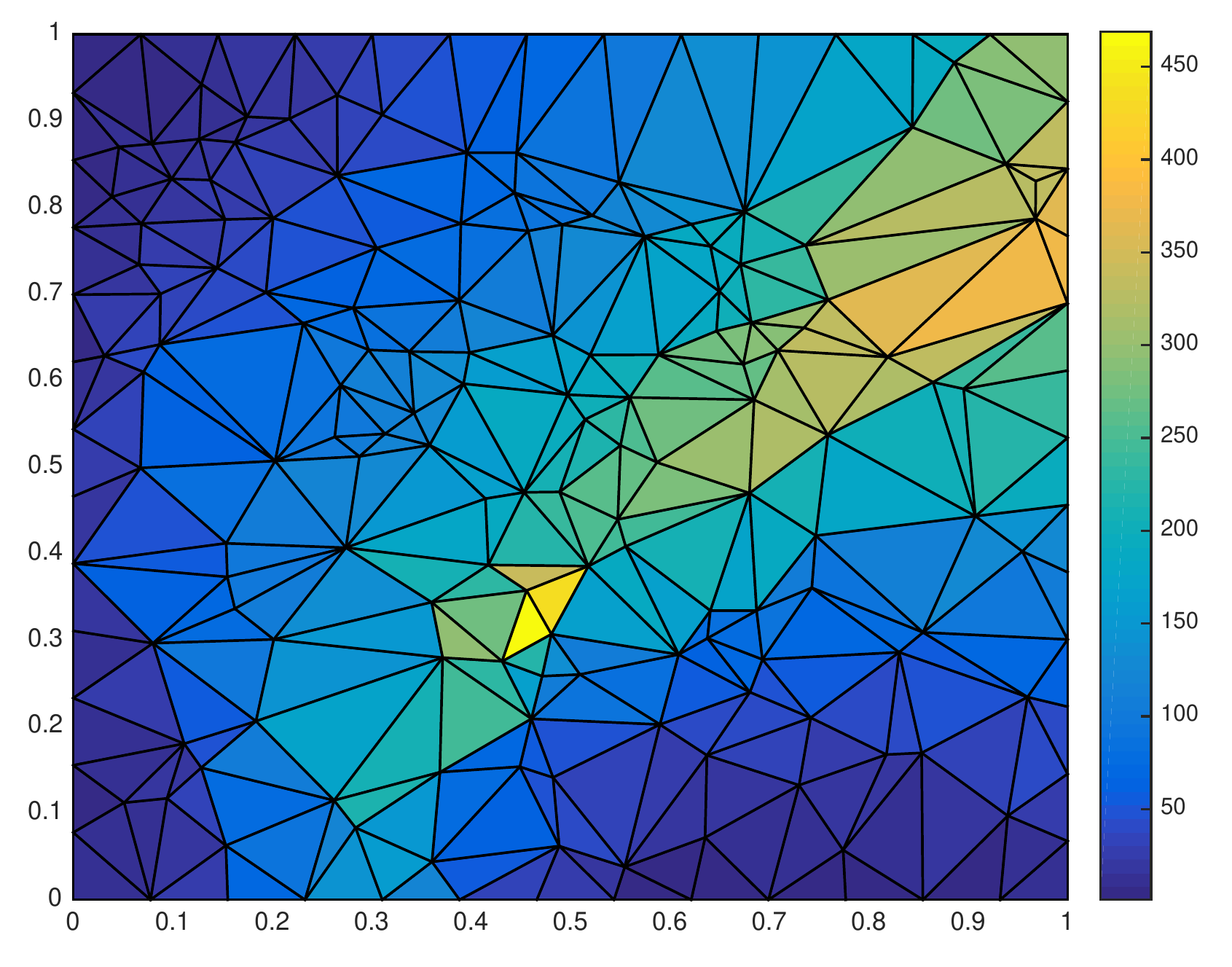} } }
        \end{subfigure}%
        \caption{First example for verification purposes}
\label{Fig Surface and Time Expected Ex 1} 
\end{figure}
\end{example}
\begin{example}\label{Ex Exploration Example}
The second example explores a more complex scenario. The surface is given by the function $\zeta: G\rightarrow \R$, $\zeta(x, y) = 0.5 \, x \,\sin(2\pi\, x) + 0.5\, x + 0.075\, \sin(6\pi \,y)$. Clearly, this function has significant more curvature and surface than the previous one, aside from not been harmonic. Again, for the first experiment we set $P \equiv 0$ and for the second, a pressure potential $P = 4000 \log ((x - 110)^2 + (y - 110)^2 )$ i.e., an extraction well beyond the upper right corner of the graphic. The well is placed near the highest point of the surface to counteract the dominant effect of the gravity as it sees in Figures \ref{Fig Surface and Time Expected Ex 2} (a) and (b). For the gravity driven flow the velocity field (see Figure \ref{Fig Surface and Time Expected Ex 2} (c)) attains the highest velocities and consequently the highest discharge rates in the valleys of the surface as expected. In the second case, due to the presence of the well the flow preference is split in two portions, one fraction moves towards the well, the other through the lower valleys and, the highest velocities are attained near the main evacuation points, see Figure \ref{Fig Surface and Time Expected Ex 2} (d). Finally, the expected reaching times agree with the intuitive notion, in the first case the highest values are in the top peaks and the lowest values near the boundary and the lower valleys, see Figure \ref{Fig Surface and Time Expected Ex 2} (e). In the second case, the lowest expected reaching times occur near the major evacuation points the lowest take place on the peaks away from both evacuation points,  see Figure \ref{Fig Surface and Time Expected Ex 2} (f). 
\begin{table}[h!]
\def\arraystretch{1.4}
\begin{center}
\begin{tabular}{ c c c c c c c}
    \hline
Experiment  & 
$\m[\u(\v)]$ &
$U_{\curv}$ & 
$U_{\fric} $ & 
$U_{\grav}$  &
Energy Balance &
$\m\big[\mathbb{E}(\tau | X(0) = K)\big]$ \\
           & 
[cm/s]  &
[erg/s] & 
[erg/s] & 
[erg/s] &
[erg/s] &
[s] \\[2pt]
\toprule
\shortstack{Gravity\\driven only} & 
$\begin{pmatrix}
    0.0086 \\
   -0.2650 \\
   -0.3012 \\
\end{pmatrix}$  &
   94.70545 &
   214.7014 &
  -30869.39 &
  -30559.98 &
198
\\[20pt] 
\shortstack{Gravity and \\pressure driven} & 
$\begin{pmatrix}
   0.3868 \\
   0.2218 \\
  -0.1802 \\
\end{pmatrix}$  &
   273.5967 &
   531.7068 &
  -16713.18 &
  -15907.87 &
177
\\ 
    \hline
\end{tabular}
\end{center}
\caption{Example \ref{Ex Exploration Example}: Energy Dissipation Table, 1046 Elements.}
\label{Tb Example 2}
\end{table}

We summarize in the Table \ref{Tb Example 2} below the overall behavior of the surface for both potentials.   
Again it follows that for both potentials, dominant effect is the gravity. In the f the order of scales states that $U_{\curv} \sim U_{\fric} \ll U_{\grav}$. In the second case the well pulls the flow uphill towards the highest point consequently, the value of $U_{\grav}$ contracts around 45 percent, while $U_{\curv}$ and $U_{\grav}$ increase drastically around 300 and 100 percent respectively but preserving the same order of scales $U_{\curv} \sim U_{\fric} \ll U_{\grav}$. It also follows that the increase of these two effect come from the fact that the new stream lines travel along more curved and therefore longer paths. Finally, it is observed that the average expected reaching time decreases in only 10 percent from the first to the second case; although there are now two main evacuation points, the top point demands more travel time due to the increase in the length of the stream lines and the magnitudes of the flow field.
%
\begin{figure}
        \centering
        \begin{subfigure}[Forcing Potential Surface $ \rho\, \g \, \zeta$.]
                {\resizebox{6.5cm}{6.5cm}
{\includegraphics{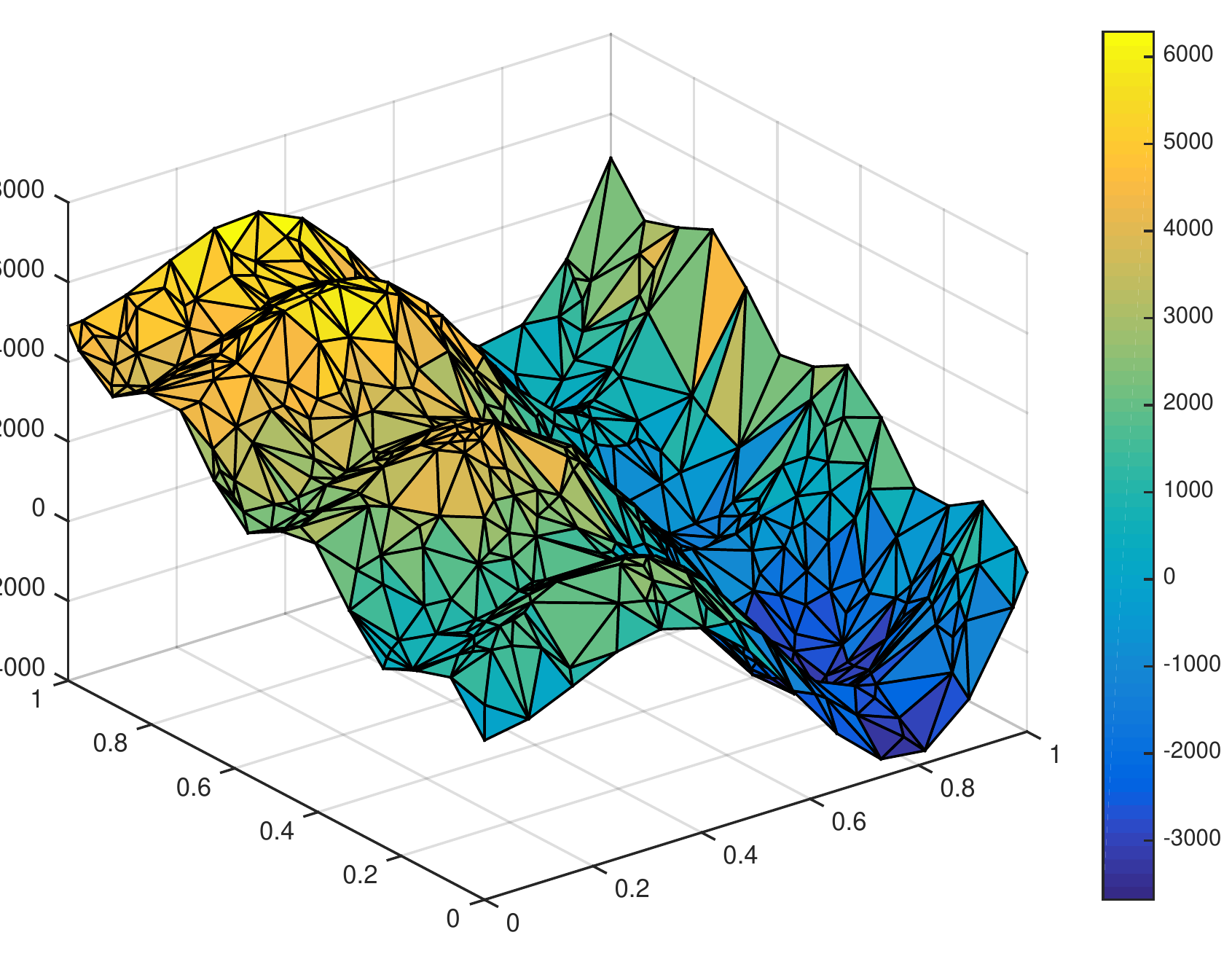} } }
        \end{subfigure}
        \qquad
        ~ 
          \begin{subfigure}[Forcing Potential Surface $\rho\, \g\, \zeta + P$.]
                {\resizebox{6.5cm}{6.5cm}
{\includegraphics{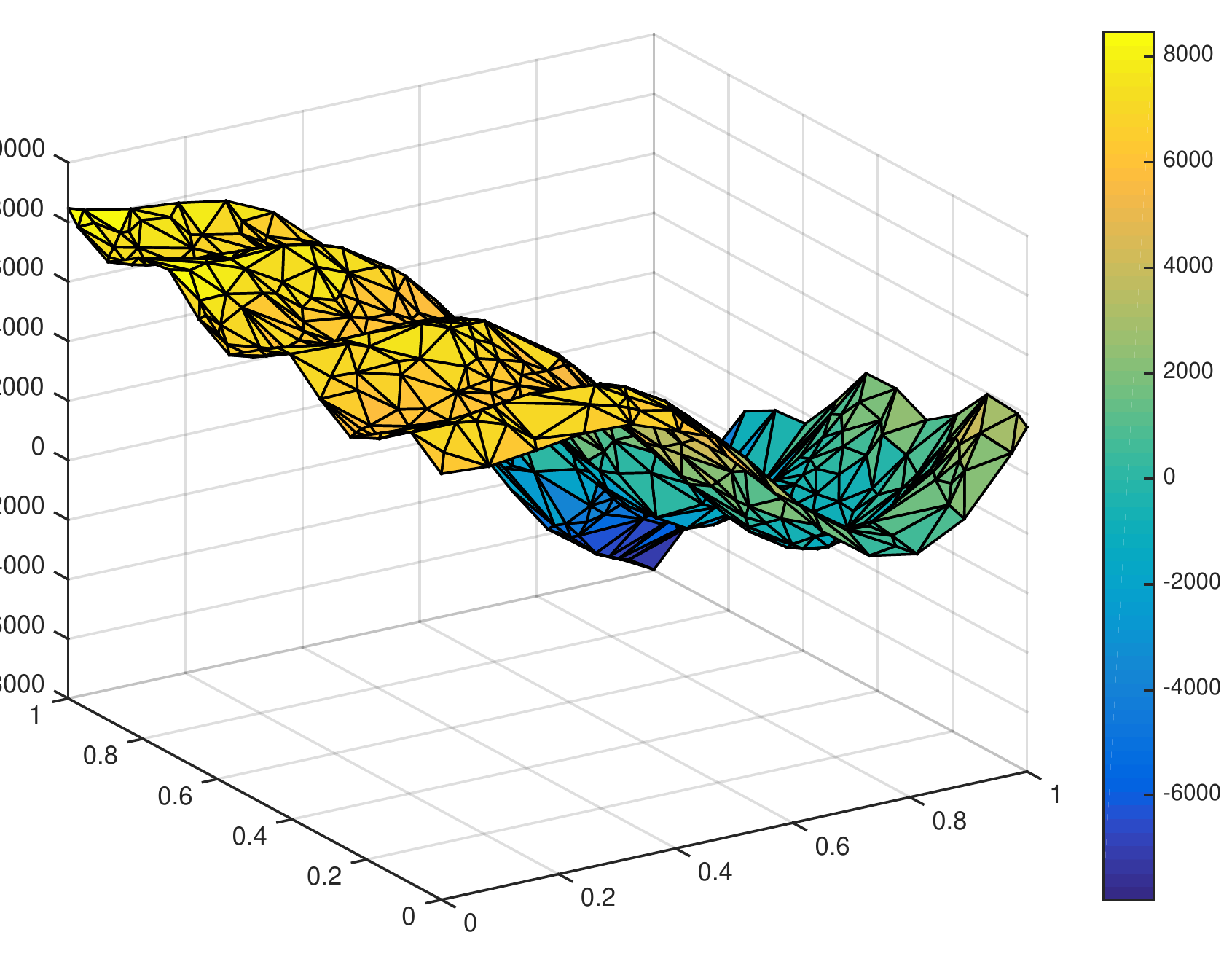} } }
        \end{subfigure}%
        \\
        \begin{subfigure}[Flow Field gravity driven flow.]
                {\resizebox{6.5cm}{6.5cm}
{\includegraphics{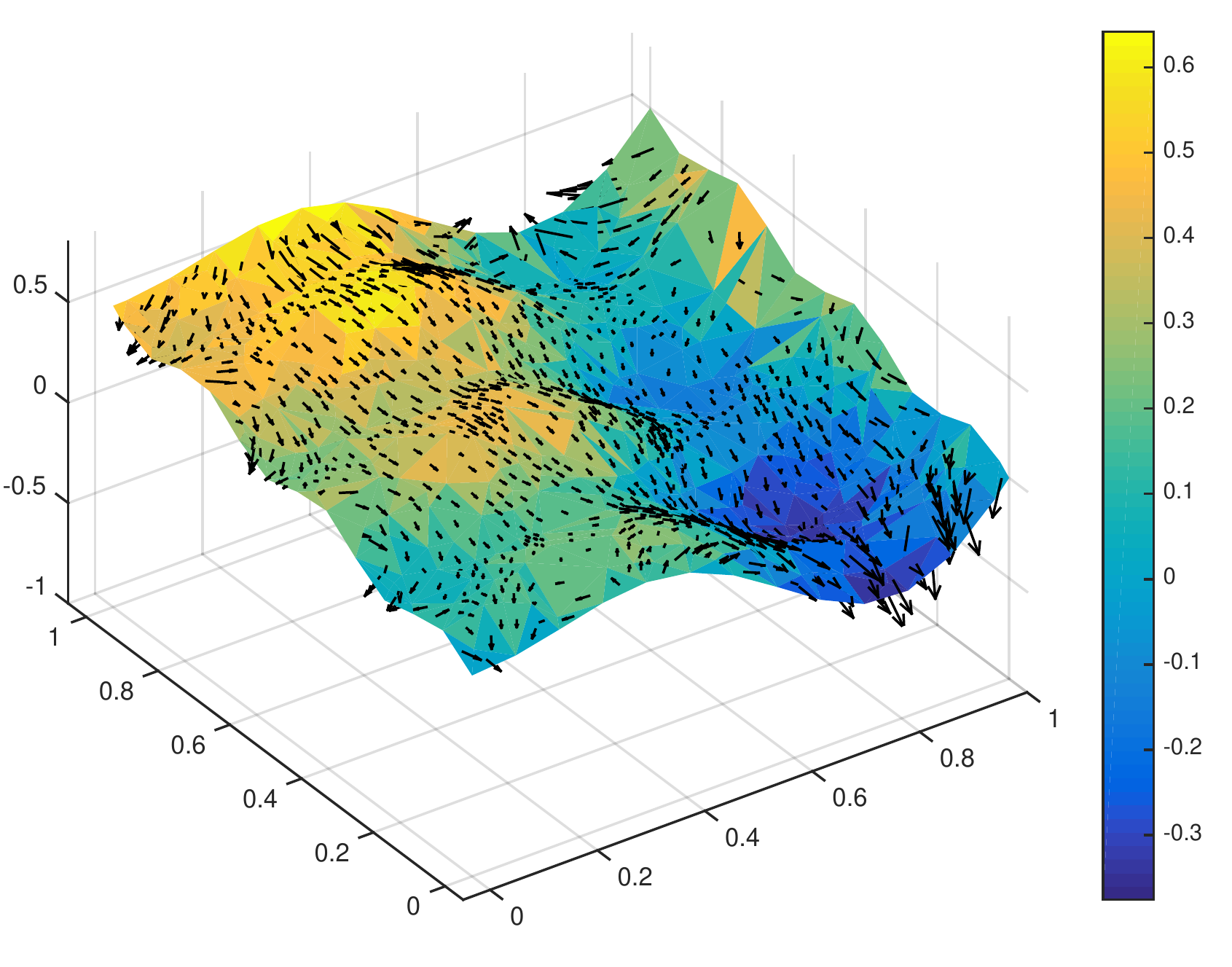} } }
        \end{subfigure}
        \qquad
        ~ 
          \begin{subfigure}[Flow Field gravity and pressure driven flow.]
                {\resizebox{6.5cm}{6.5cm}
{\includegraphics{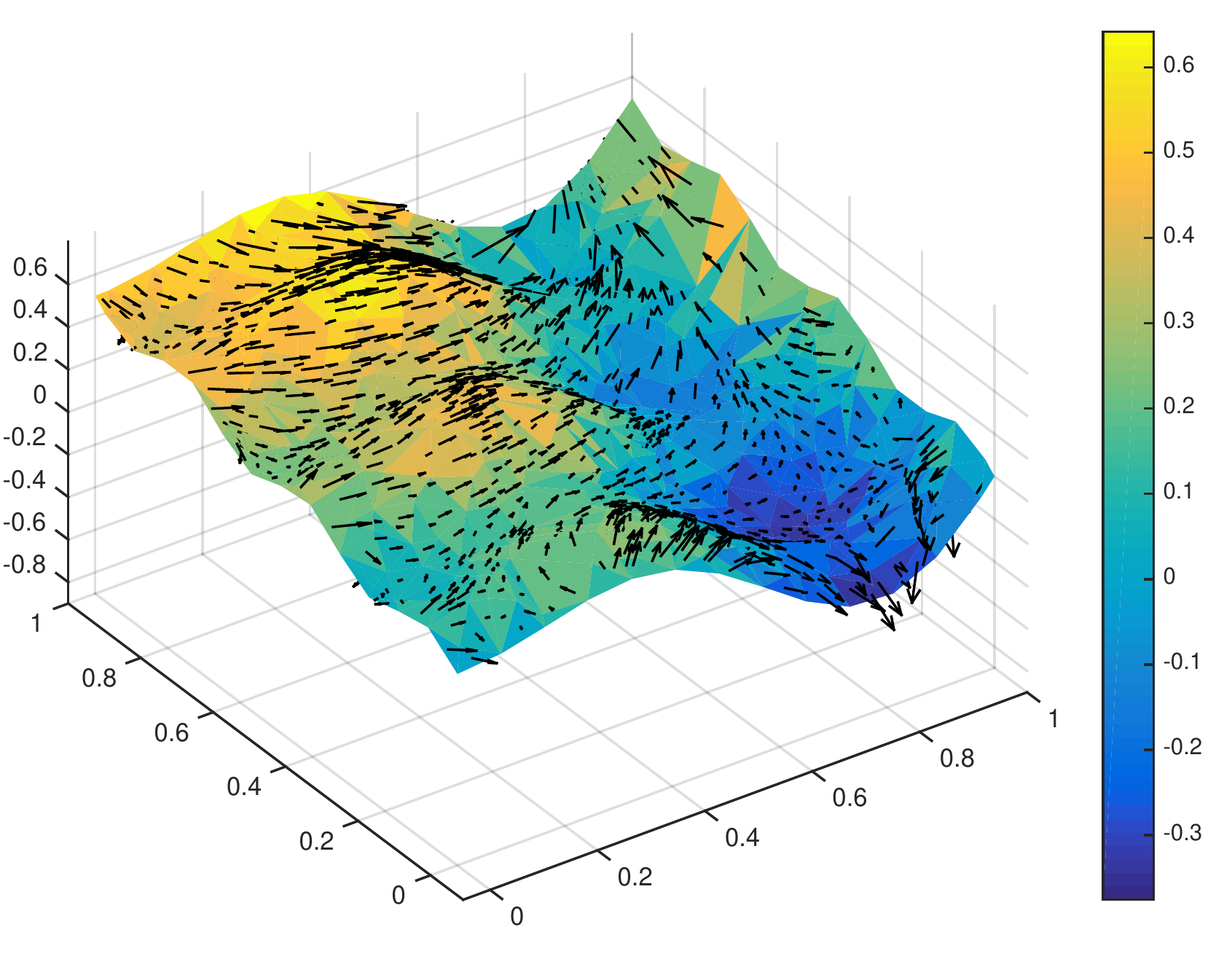} } }
        \end{subfigure}%
        \\
        \begin{subfigure}[Expected boundary reaching times \newline gravity driven flow.]
                {\resizebox{6.5cm}{6.5cm}
{\includegraphics{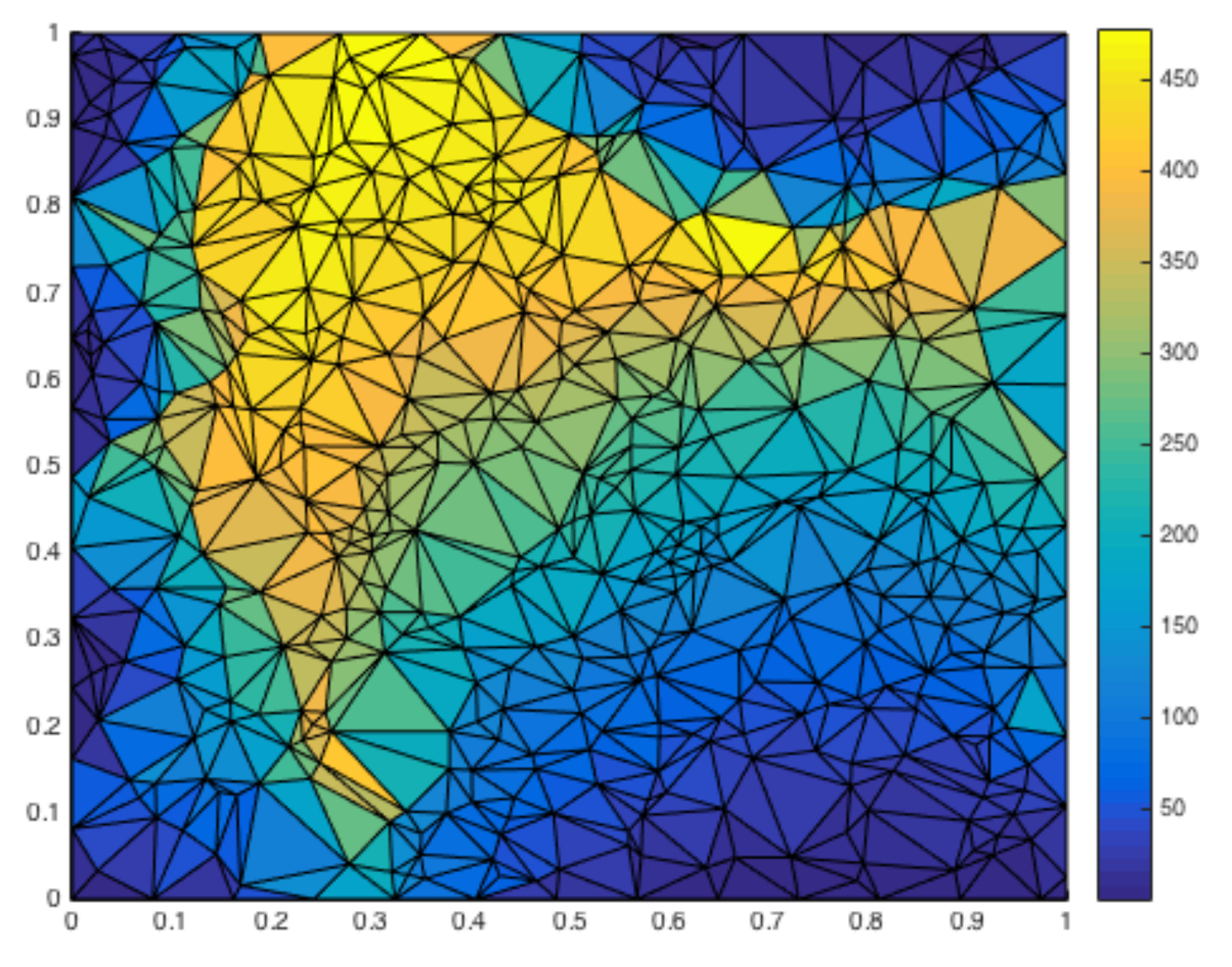} } }
        \end{subfigure}
        \qquad
        ~ 
          \begin{subfigure}[Expected boundary reaching times \newline gravity and pressure driven flow.]
                {\resizebox{6.5cm}{6.5cm}
{\includegraphics{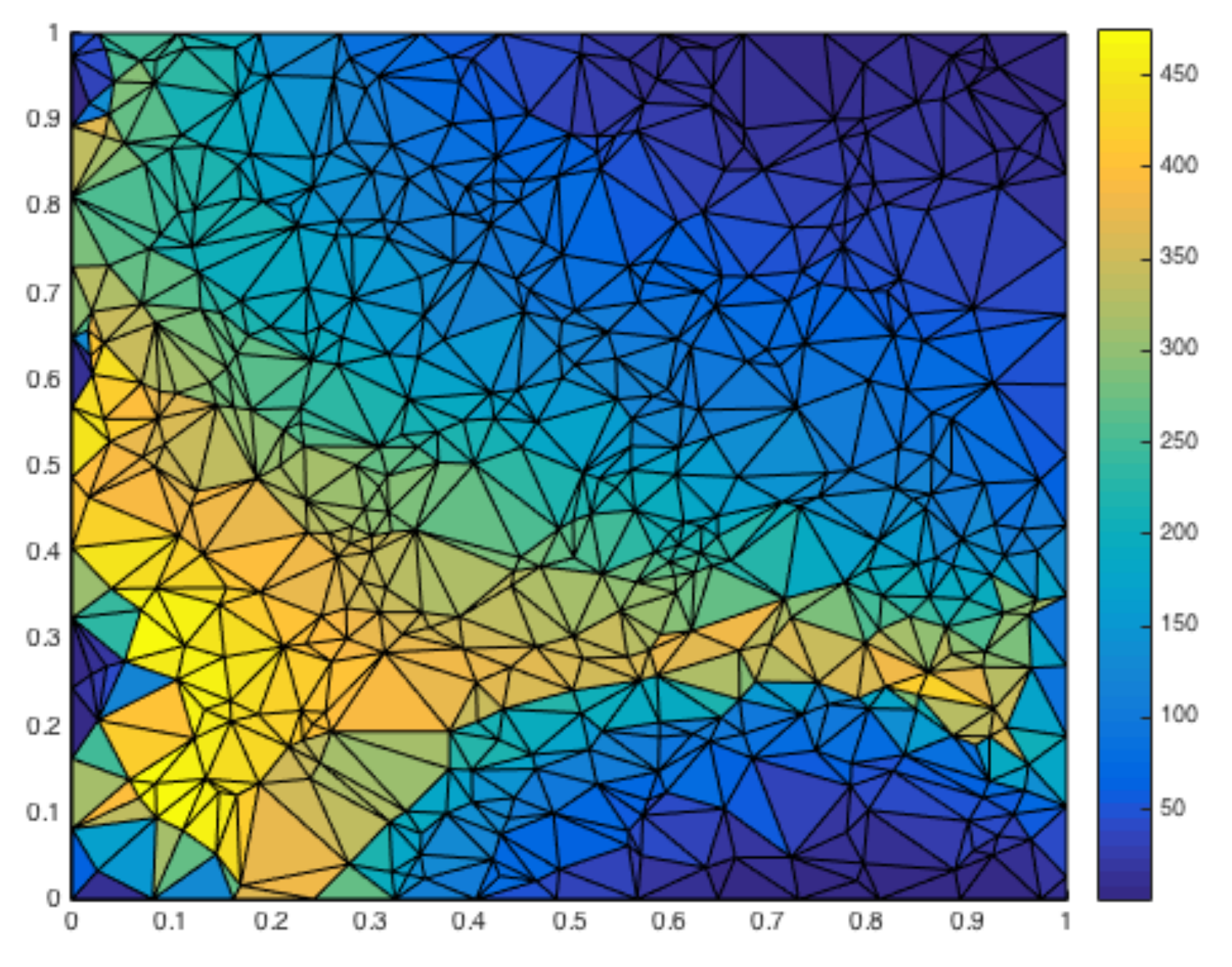} } }
        \end{subfigure}%
\caption{Second example for experimental purposes}
\label{Fig Surface and Time Expected Ex 2}
\end{figure}
\end{example}
%
%
%
%
%
%
%
%
%
\section{Conclusions}
%
%
The present work yields several conclusions listed below
\begin{enumerate}[(i)]
\item We proposed a method aimed to describe the flow on the fissures embedded in a porous medium and also to measure its impact in terms of energy dissipation and transport phenomena.

\item Unlike previous achievements, the present work is capable of describing explicitly the flow field on the fissure under mild hypotheses. Moreover, here the geometry of the manifold plays a central role for the quantifications we seek. 

\item Our approach is consistent and applicable to real case scenarios as pointed out in the introduction, because the available data for geological structures are discrete, therefore and interpolation process has to be done and the uncertainty coming from the knowledge of the geometry needs to be accounted for. 

\item The proposed method is remarkably simple as it is in essence a composition of linear operators and yet it computes accurately the analyzed phenomena and can be extended to other physical aspects. This simplicity is precisely its main fortitude because it makes it computationally efficient and thus fit for large scale computations, when it comes to simulate highly fractured media. 

\item The values of energy dissipation may not seem important in the presented experiments and they are of lower order scale with respect to the gravity however, at the field scale this effects become important as they add up the losses from a large number of extended fissures. In addition the average velocity for a given surface $\m[ \u(\v) ]$ defines what would the preferential flow direction of the medium would be, should there be only one embedded fissure. It follows that for a highly fractured medium the preferential flow direction (see \cite{FioriJankovic} for a different approach) is determined by the mean flow directions of each surface weighted vs. the corresponding discharge rate they host, relative to the total discharge rate along the cracks network. 

\item A natural question would be the extension of the method to a different interpolation method for the geometry, e.g. using splines rather than linear affine functions. However, this approach introduces a lot of computational complexity both on the geometric interpolation itself and in the construction of a conservative tangential flow analogous to the one furnished by the operator $F$ defined in Identity \eqref{Eq global velocity due curvature 3-D}. Hence, despite its mathematical interest, such approach is not viable for large scale simulations as the built-in complexity introduces issues such as 
ill-conditioned matrices, problems of numerical stability, poor quality of the numerical solutions and high computational costs.

\end{enumerate}
%
%
%
%
%
%
%
%
%
\section*{Acknowledgments}
%
%
This work was supported by the project HERMES 27798 from Universidad Nacional de Colombia, Sede Medell\'in.
%
%
%
%
%
%
%
%
%
%
%
%
%

%
%
%
%
%
%
%
%
%
\end{document}